\newtheorem{theorem}{Theorem}
\newtheorem{lemma}[theorem]{Lemma}
\newtheorem{corollary}[theorem]{Corollary}
\newtheorem{proposition}[theorem]{Proposition}
\begin{document}
\newcommand{\tto}{\twoheadrightarrow}

\title{Blocks of the category of cuspidal $\mathfrak{sp}_{2n}$-modules}
\author{Volodymyr Mazorchuk and Catharina Stroppel}
\date{\today}

\maketitle

\begin{abstract}
We show that every block of the category of cuspidal generalized
weight modules with finite dimensional generalized weight spaces over
the Lie algebra $\mathfrak{sp}_{2n}(\mathbb{C})$ is equivalent to the
category of finite dimensional $\mathbb{C}[[t_1,t_2,\dots,t_n]]$-modules.
\end{abstract}


\section{Introduction and description of the results}\label{s0}

We fix the ground field to be the complex numbers.
Fix $n\in\{2,3,\dots\}$ and consider the symplectic Lie algebra
$\mathfrak{sp}_{2n}=:\mathfrak{g}$ with a fixed Cartan subalgebra
$\mathfrak{h}$ and root space decomposition
\begin{displaymath}
\mathfrak{g}=\mathfrak{h} \oplus
\bigoplus_{\alpha\in\Delta}\mathfrak{g}_{\alpha},
\end{displaymath}
where $\Delta$ denotes the  corresponding set of roots.
For a $\mathfrak{g}$-module $V$ and
$\lambda\in\mathfrak{h}^*$ set
\begin{gather*}
V_{\lambda}:=\{v\in V:h\cdot v=\lambda(h)
v\text{ for any }h\in\mathfrak{h}\},\\
V^{\lambda}:=\{v\in V:(h-\lambda(h))^k\cdot v=0
\text{ for any }h \in\mathfrak{h}\text{ and }k\gg 0\}.
\end{gather*}
A $\mathfrak{g}$-module $V$ is called
\begin{itemize}
\item {\em weight} provided that
$V=\oplus_{\lambda\in\mathfrak{h}^*}V_{\lambda}$;
\item {\em generalized weight} provided that
$V=\oplus_{\lambda\in\mathfrak{h}^*}V^{\lambda}$;
\item {\em cuspidal} provided that for any $\alpha\in\Delta$ the action
of any nonzero element from $\mathfrak{g}_{\alpha}$ on $V$ is bijective.
\end{itemize}
If $V$ is a generalized weight module, then the
set $\{\lambda\in\mathfrak{h}^*:V_{\lambda}\neq 0\}$ is called
the {\em support} of $V$ and is denoted by $\mathrm{supp}(V)$.

Denote by $\hat{\mathcal{C}}$  the full subcategory in
$\mathfrak{g}\text{-}\mathrm{mod}$ which consist of all cuspidal
generalized weight modules with finite-dimensional generalized weight
spaces, and by $\mathcal{C}$ the full subcategory of
$\hat{\mathcal{C}}$ consisting of all weight modules.
Understanding the categories $\mathcal{C}$ and $\hat{\mathcal{C}}$
is a classical problem in the representation theory of Lie algebras.
The first major step towards the solution of this problem was made in
\cite{Ma}, where all simple objects in $\hat{\mathcal{C}}$ were classified.
In \cite{BKLM} it was shown that the category $\mathcal{C}$ is semi-simple,
hence completely understood. The aim of the present note is to describe
the category $\hat{\mathcal{C}}$.

Apart from $\mathfrak{sp}_{2n}$, cuspidal weight modules with
finite dimensional weight spaces exist only for the Lie algebra
$\mathfrak{sl}_{n}$ (\cite{Fe}). In the latter case, simple objects in
the corresponding category $\hat{\mathcal{C}}$ are classified in
\cite{Ma}, the category $\mathcal{C}$ is described in \cite{GS},
see also \cite{MS},
and the category $\hat{\mathcal{C}}$ is described in \cite{MS}.
Taking all these results into account, the present paper completes
the study of cuspidal generalized weight modules with finite dimensional
generalized weight spaces over semi-simple finite-dimensional
Lie algebras.

Let $U(\mathfrak{g})$ be the universal enveloping algebra of $\mathfrak{g}$
and $Z(\mathfrak{g})$ be the center of $U(\mathfrak{g})$. The action of
$Z(\mathfrak{g})$ on any object from $\hat{\mathcal{C}}$ is locally finite.
Using this and the standard support arguments gives the following
{\em block decomposition} of $\hat{\mathcal{C}}$:
\begin{displaymath}
\hat{\mathcal{C}}\cong  \bigoplus_{\text{\tiny\begin{tabular}{c}
$\chi:Z(\mathfrak{g})\to\mathbb{C}$\\$\xi\in\mathfrak{h}^*/
\mathbb{Z}\Delta$\end{tabular}}}\hat{\mathcal{C}}_{\chi,\xi},
\end{displaymath}
where $\hat{\mathcal{C}}_{\chi,\xi}$ consists of all $V$ such that
$\mathrm{Supp}(V)\subset \xi$ and $(z-\chi(z))^k\cdot v=0$ for all
$v\in V$, $z\in Z(\mathfrak{g})$ and  $k\gg 0$.
Set $\mathcal{C}_{\chi,\xi}:=\mathcal{C}\cap\hat{\mathcal{C}}_{\chi,\xi}$.
From \cite[Section~9]{Ma} it follows that each nontrivial
$\hat{\mathcal{C}}_{\chi,\xi}$ contains a unique (up to isomorphism) simple
object, in particular, $\hat{\mathcal{C}}_{\chi,\xi}$ is indecomposable,
hence a block.
From this and \cite{BKLM} we thus get that every nontrivial
block $\mathcal{C}_{\chi,\xi}$ is equivalent to the category of
finite-dimensional
$\mathbb{C}$-modules. Our main result is the following:

\begin{theorem}\label{thm1}
Every nontrivial block $\hat{\mathcal{C}}_{\chi,\xi}$ is equivalent
to the category of finite dimensional
$\mathbb{C}[[t_1,t_2,\dots,t_n]]$-modules.
\end{theorem}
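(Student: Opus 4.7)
My plan is to establish the equivalence via the generalized weight space functor
$F\colon\hat{\mathcal{C}}_{\chi,\xi}\to\mathbb{C}[[t_1,\dots,t_n]]\text{-}\mathrm{mod}_{\mathrm{fd}}$,
$V\mapsto V^\mu$, for a fixed reference weight $\mu\in\xi$. Choosing a basis $h_1,\dots,h_n$ of $\mathfrak{h}$ and setting $t_i:=h_i-\mu(h_i)$, the operators $t_i$ commute and act locally nilpotently on $V^\mu$; since $\dim V^\mu<\infty$ by hypothesis, this exhibits $V^\mu$ as a finite-dimensional module over the completion $\mathbb{C}[[t_1,\dots,t_n]]$ of $U(\mathfrak{h})$ at the maximal ideal corresponding to $\mu$.

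Exactness of $F$ is immediate. Faithfulness uses cuspidality in an essential way: since every nonzero $x\in\mathfrak{g}_\alpha$ acts bijectively on any $V\in\hat{\mathcal{C}}_{\chi,\xi}$, composites of such operators transport $V^\mu$ bijectively onto $V^\lambda$ for every $\lambda\in\xi$, so any morphism annihilating $V^\mu$ annihilates all of $V$. Fullness follows by the same transport: a $\mathbb{C}[[t_1,\dots,t_n]]$-linear map $V^\mu\to W^\mu$ propagates uniquely to every generalized weight space, and compatibility with the root-space action is forced by the commutation relations $[h_i,x]=\alpha(h_i)x$ together with the semisimple weight-block result of \cite{BKLM}.

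The core of the argument is essential surjectivity. The input is the realization, from \cite{Ma}, of the unique simple $L\in\hat{\mathcal{C}}_{\chi,\xi}$ inside a Mathieu (semisimple) coherent family $\mathcal{L}=\bigoplus_{\lambda\in\xi}\mathcal{L}_\lambda$, whose key structural feature is that the matrix coefficients of the $\mathfrak{g}$-action depend polynomially on $\lambda\in\mathfrak{h}^*$. Given a finite-dimensional $\mathbb{C}[[t_1,\dots,t_n]]$-module $M$, I would define $\Phi(M)\in\hat{\mathcal{C}}_{\chi,\xi}$ by setting $\Phi(M)^\lambda:=\mathcal{L}_\lambda\otimes_{\mathbb{C}}M$ for each $\lambda\in\xi$ and letting $\mathfrak{g}$ act by formally substituting $\lambda\mapsto\lambda+(t_1,\dots,t_n)$ into Mathieu's polynomial formulas; concretely, $h_i$ acts on $\Phi(M)^\lambda$ as $\lambda(h_i)\cdot\mathrm{id}+\mathrm{id}\otimes t_i$, and root-space elements act through power-series expansions of the coherent family formulas around $\lambda$, which converge in $\mathbb{C}[[t_1,\dots,t_n]]$. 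By construction $F(\Phi(M))\cong M$, which combined with fullness and faithfulness shows $F$ is an equivalence.

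The principal obstacle is verifying that Mathieu's coherent family formulas for $\mathfrak{sp}_{2n}$ genuinely extend to a formally flat family over $\mathbb{C}[[t_1,\dots,t_n]]$: equivalently, that all denominators appearing in Mathieu's twisted-localization construction of $\mathcal{L}$ are invertible in the formal neighborhood of $\mu$ inside $\xi$. For $\mathfrak{sl}_n$ this was handled in \cite{MS} and simplified by the fact that all simple cuspidal modules are pointed; for $\mathfrak{sp}_{2n}$ the generic weight multiplicities of $L$ are in general strictly larger, so checking pole-freedom of the twisted-localization formulas around a generic $\mu\in\xi$ is the technical heart of the argument and will likely require separate analysis of the long and short root subalgebras.
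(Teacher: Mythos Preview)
Your overall architecture---a weight-space functor $F$ one way and a coherent-family deformation functor $\Phi$ the other---is the same as the paper's, but you have the difficulty backwards, and the step you treat as routine is in fact the heart of the argument.

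First, the obstacle you identify is not one. By Lemma~\ref{lem2} all nontrivial blocks of $\hat{\mathcal{C}}$ are equivalent, so one may work in a block whose simple object is \emph{completely pointed}. There the coherent family is given by the explicit polynomial formulae \eqref{eq0}, with no denominators whatsoever, and $\Phi$ is defined by literally replacing each $a_i$ by $a_i+T_i$ as in \eqref{eq1}. So your worry about ``pole-freedom of the twisted-localization formulas'' and about weight multiplicities larger than one evaporates once this reduction is made.

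Second, and more seriously, your fullness argument is not a proof. Given a $\mathbb{C}[[t_1,\dots,t_n]]$-linear map $f\colon V^{\mu}\to W^{\mu}$, transporting it to $V^{\lambda}$ along a chosen word in the root vectors certainly produces a linear map, and the relation $[h_i,x]=\alpha(h_i)x$ does force compatibility with $\mathfrak h$. But it does \emph{not} force compatibility with the root vectors: you must show that different paths from $\mu$ to $\lambda$ give the same transported map, equivalently that every closed loop in root vectors acts on $V^{\mu}$ by an operator commuting with $f$. Invoking \cite{BKLM} does not help here: semisimplicity of the weight category $\mathcal C$ says nothing about how the nilpotent part of the $\mathfrak h$-action interacts with such loop operators in $\hat{\mathcal C}$. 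In the paper's orientation (where the functor goes from $\mathbb{C}[[t_1,\dots,t_n]]$-modules to $\hat{\mathcal C}_{\mathbf a}$) this is precisely the density statement, and it occupies all of Section~\ref{s3}: an induction on $n$ reducing to $\mathfrak{sp}_{2n-2}$, with base case $n=2$ requiring a careful analysis of how the $\mathfrak{sl}_2$-Casimir behaves under tensoring with small modules (Proposition~\ref{propsl2}) together with the explicit computations of Lemmata~\ref{lem1002} and \ref{lem1003}. Your proposal contains no analogue of this step.
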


To prove Theorem~\ref{thm1} we use and further develop the technique of
extension of the module structure from a Lie subalgebra, originally
developed  in \cite{MS} for the study of categories of singular and
non-integral cuspidal generalized weight $\mathfrak{sl}_{n}$-modules.
The proof of Theorem~\ref{thm1} is given in Section~\ref{s3}. In
Section~\ref{s1} we recall the standard reduction to the case of
the so-called simple {\em completely pointed} modules (i.e. simple weight
cuspidal modules for which {\bf all} nontrivial weight spaces are
one-dimensional) and a realization of such modules using differential
operators. In Section~\ref{s2} we define a functor from the category of
finite dimensional $\mathbb{C}[[t_1,t_2,\dots,t_n]]$-modules to any block
$\hat{\mathcal{C}}_{\chi,\xi}$ containing a simple completely pointed
module. In  Section~\ref{s3} we prove that this functor is an equivalence
of categories. In Section~\ref{s4} we present some consequences of
our main result, in particular, we recover the main result from
\cite{BKLM} stated above.
\vspace{5mm}

\noindent
{\bf Acknowledgments.} The first author was partially supported by the
Royal Swedish Academy of Sciences and the Swedish  Research Council.
The paper was written up, when the second author was visiting the Mathematical Sciences Research Institute (MSRI) in Berkeley. She deeply acknowledges the excellent working conditions and financial support.

\section{Completely pointed simple cuspidal weight modules}\label{s1}

A weight $\mathfrak{g}$-module $V$ is called {\em pointed} provided that
$\dim V_{\lambda}=1$ for some $\lambda\in\mathfrak{h}^*$. If $V$ is a
pointed simple cuspidal weight $\mathfrak{g}$-module, then, obviously,
all nontrivial weight spaces of $V$ are one-dimensional, in which case
one says that $V$ is {\em completely pointed} (see \cite{BKLM}).
It is enough to consider blocks with completely pointed simple
modules because of the following:

\begin{lemma}\label{lem2}
All nontrivial blocks of $\hat{\mathcal{C}}$ are equivalent.
\end{lemma}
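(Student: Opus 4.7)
The plan is to construct explicit equivalences between any two nontrivial blocks using Mathieu's twisted localization technique. Choose $n$ mutually commuting long root vectors $f_i\in\mathfrak{g}_{-2\epsilon_i}$ for $i=1,\dots,n$; they commute since $-2\epsilon_i-2\epsilon_j$ is never a root of $\mathfrak{sp}_{2n}$, and their weights $\{-2\epsilon_i\}$ form a $\mathbb{C}$-basis of $\mathfrak{h}^*$. By cuspidality each $f_i$ acts bijectively on every object of $\hat{\mathcal{C}}$, so the multiplicative set $S$ generated by $f_1,\dots,f_n$ satisfies the Ore conditions in $U(\mathfrak{g})$ and every $V\in\hat{\mathcal{C}}$ extends uniquely to a module over the localization $U_S:=S^{-1}U(\mathfrak{g})$. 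For any $\mu=(\mu_1,\dots,\mu_n)\in\mathbb{C}^n$, formal conjugation by the symbol $f_1^{\mu_1}\cdots f_n^{\mu_n}$ defines a genuine algebra automorphism $\Theta^{\mu}$ of $U_S$, because each $\mathrm{ad}\,f_i$ is locally nilpotent on $U(\mathfrak{g})$ so the Leibniz expansion of $\Theta^{\mu}(u)$ terminates after finitely many steps. Twisting the action by $\Theta^{\mu}$ yields an auto-equivalence $T^{\mu}$ of $U_S\text{-}\mathrm{mod}$ with quasi-inverse $T^{-\mu}$.

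The next task is to verify that $T^{\mu}$ preserves the subcategory $\hat{\mathcal{C}}$ and to track how it acts on blocks. Concretely, I would check: (i) $T^{\mu}V$ has the same finite-dimensional generalized weight spaces as $V$, only relabeled so that $(T^{\mu}V)^{\lambda-2\sum_i\mu_i\epsilon_i}\cong V^{\lambda}$ (the sign coming from $[h,f_i]=-2\epsilon_i(h)f_i$); (ii) for any $\alpha\in\Delta$ and $0\neq e\in\mathfrak{g}_{\alpha}$, the image $\Theta^{\mu}(e)\in U_S$ is still a weight element of weight $\alpha$ whose action on $T^{\mu}V$ is bijective, so cuspidality is preserved; (iii) for $z\in Z(\mathfrak{g})$ the element $\Theta^{\mu}(z)\in U_S$ acts locally finitely on $T^{\mu}V$ with generalized eigenvalues computable from $(\chi,\mu)$ via the Harish-Chandra homomorphism. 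Together (i)--(iii) imply that $T^{\mu}$ sends $\hat{\mathcal{C}}_{\chi,\xi}$ into some $\hat{\mathcal{C}}_{\chi',\,\xi-2\sum_i\mu_i\epsilon_i}$, with $\chi'$ determined by $\chi$ and $\mu$.

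Since $\{2\epsilon_1,\dots,2\epsilon_n\}$ spans $\mathfrak{h}^*$, given any two nontrivial blocks $\hat{\mathcal{C}}_{\chi_1,\xi_1}$ and $\hat{\mathcal{C}}_{\chi_2,\xi_2}$ we can pick $\mu\in\mathbb{C}^n$ with $\xi_2\equiv\xi_1-2\sum_i\mu_i\epsilon_i\pmod{\mathbb{Z}\Delta}$. By \cite{Ma} each block contains a unique simple, so $T^{\mu}$ is forced to carry the simple of $\hat{\mathcal{C}}_{\chi_1,\xi_1}$ onto that of $\hat{\mathcal{C}}_{\chi_2,\xi_2}$, which matches the central characters; hence $T^{\mu}$ restricts to the required equivalence. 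The main obstacle is step (iii): although $\Theta^{\mu}(z)$ typically leaves $U(\mathfrak{g})$ for non-integral $\mu$, one must show that its action on any cuspidal generalized weight module is still locally finite with the predicted single central character, so that the target of $T^{\mu}$ is a single block rather than a direct sum.
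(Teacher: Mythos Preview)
The paper's own proof is a one-line citation: it invokes \cite[Lemma~2]{BKLM} and remarks that the same argument carries over from $\mathcal{C}$ to $\hat{\mathcal{C}}$. Your proposal via Mathieu's twisted localization is precisely the kind of argument that lies behind such citations, so in spirit you are doing what the paper defers to the literature. The concern you flag about step~(iii) is in fact harmless: $\Theta^{\mu}(z)$ is central in $U_S$ and has weight zero, hence preserves each finite-dimensional generalized weight space of $V$ and automatically acts locally finitely; since $T^{\mu}$ is exact and sends simples to simples, all composition factors of $T^{\mu}V$ are isomorphic to the single simple $T^{\mu}L$, which forces a single generalized central character.

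There is, however, a genuine gap elsewhere. You choose $\mu$ so that the \emph{support} cosets match and then write: ``each block contains a unique simple, so $T^{\mu}$ is forced to carry the simple of $\hat{\mathcal{C}}_{\chi_1,\xi_1}$ onto that of $\hat{\mathcal{C}}_{\chi_2,\xi_2}$.'' But uniqueness of the simple is per pair $(\chi,\xi)$, not per $\xi$ alone. Nothing you have said rules out the existence of several non-isomorphic simple cuspidal modules with the same support $\xi_2$ but different central characters; if that happened, $T^{\mu}L_1$ could land in a block $(\chi',\xi_2)$ with $\chi'\neq\chi_2$, and your $n$ parameters $\mu_i$---already spent matching $\xi$---give you no further freedom to adjust $\chi$. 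To close this gap you need one of two additional inputs: either (a) the fact, specific to type $C_n$ and implicit in Mathieu's classification, that every simple cuspidal $\mathfrak{sp}_{2n}$-module is completely pointed and hence determined up to isomorphism by its support coset; or (b) translation functors (tensor with finite-dimensional modules and project to a central character) to move between blocks with the same support but different $\chi$, as in \cite{BKLM}. A smaller point: your justification of cuspidality in (ii) is too quick---that $\Theta^{\mu}(e)$ has weight $\alpha$ does not by itself make its action bijective; the clean way is to argue on simples first (a simple weight $U_S$-module with finite-dimensional weight spaces on which the $f_i$ act invertibly is cuspidal, because any root vector with non-bijective action would produce a proper submodule) and then pass to extensions.
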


\begin{proof}
In the case of the category $\mathcal{C}$ this is proved in
\cite[Lemma~2]{BKLM}. The same argument works in the case of the
category $\hat{\mathcal{C}}$ as well.
\end{proof}

Let us recall the explicit realization of completely pointed simple
cuspidal modules from \cite{BL}. Denote by $W_n$ the $n$-th
{\em Weyl algebra}, that is the algebra of differential operators
with polynomial coefficients in variables $x_1,x_2,\dots,x_n$.
The algebra $W_n$ is generated by $x_i$ and
$\frac{\partial}{\partial x_i}$, $i=1,\dots,n$, which satisfy the
relations $[\frac{\partial}{\partial x_i},x_j]=\delta_{i,j}$.
Let $\varepsilon_1,\varepsilon_2,\dots,\varepsilon_n$ be the vectors
of the standard basis in $\mathbb{C}^n$. Identify $\mathbb{C}^n$
with $\mathfrak{h}^*$ such that $\Delta$ becomes the following
{\em standard root system} of type $C_n$:
\begin{displaymath}
\{\pm(\varepsilon_i\pm \varepsilon_j):1\leq i<j\leq n\}
\cup\{\pm 2\varepsilon_i: 1\leq i\leq n\}.
\end{displaymath}
Then
\begin{displaymath}
\mathbf{H}=\mathbf{H}_n=\{2\varepsilon_1,\varepsilon_2-\varepsilon_{1},
\varepsilon_3-\varepsilon_{2},\dots,
\varepsilon_{n}-\varepsilon_{n-1}\}
\end{displaymath}
is a basis of $\Delta$. Fix a basis of $\mathfrak{g}$ of the form
\begin{displaymath}
\mathbf{C}:=\{X_{\pm\varepsilon_i\pm \varepsilon_j}:1\leq i<j\leq n\}
\cup\{X_{\pm 2\varepsilon_i}:i=1,2,\dots,n\}\cup
\{H_{\alpha}:\alpha\in \mathbf{H}\}
\end{displaymath}
such that the following map defines an injective Lie algebra homomorphism
from $\mathfrak{g}$ to the Lie algebra associated with $W_n$:
\begin{equation}\label{eq00}
\begin{array}{lcll}
X_{\varepsilon_i-\varepsilon_j}&\mapsto&
x_i\frac{\partial}{\partial x_j}, & 1\leq i\neq j\leq n;\\
X_{\varepsilon_i+\varepsilon_j}&\mapsto&
x_ix_j, & i,j=1,2,\dots,n;\\
X_{-\varepsilon_i-\varepsilon_j}&\mapsto&
\frac{\partial}{\partial x_i}\frac{\partial}{\partial x_j},
& i,j=1,2,\dots,n;\\
H_{\varepsilon_{i+1}-\varepsilon_{i}}&\mapsto&
x_{i+1}\frac{\partial}{\partial x_{i+1}}-x_{i}\frac{\partial}{\partial x_{i}},
& i=1,2,\dots,n-1;\\
H_{2\varepsilon_{1}}&\mapsto&
\frac{1}{2}\left(x_1\frac{\partial}{\partial x_1}+
\frac{\partial}{\partial x_{1}}x_1\right).
\end{array}
\end{equation}

Set
\begin{displaymath}
\mathbf{B}:=\{(b_1,b_2,\dots,b_n)\in\mathbb{Z}^n:
b_1+b_2+\cdots+b_n\in 2\mathbb{Z}\}.
\end{displaymath}
For $\mathbf{a}=(a_1,a_2,\dots,a_n)\in\mathbb{C}^n$ define $N(\mathbf{a})$
to be the linear span of
\begin{displaymath}
\{\mathbf{x}^{\mathbf{b}}:=
x_1^{a_1+b_1}x_2^{a_2+b_2}\cdots x_n^{a_n+b_n}:
\mathbf{b}\in\mathbf{B}
\}.
\end{displaymath}
We first define an action of the elements from $\mathbf{C}$
on $N(\mathbf{a})$ using the formulas from \eqref{eq00} as follows:
\begin{equation}\label{eq0}
\begin{array}{lcll}
X_{\varepsilon_i-\varepsilon_j}\mathbf{x}^{\mathbf{b}}&=&
(a_j+b_j)\mathbf{x}^{\mathbf{b}+\varepsilon_i-\varepsilon_j},&
1\leq i\neq j\leq n;\\
X_{\varepsilon_i+\varepsilon_j}\mathbf{x}^{\mathbf{b}}&=&
\mathbf{x}^{\mathbf{b}+\varepsilon_i+\varepsilon_j},&
i,j=1,2,\dots,n;\\
X_{-\varepsilon_i-\varepsilon_j}\mathbf{x}^{\mathbf{b}}&=&
(a_i+b_i)(a_j+b_j)\mathbf{x}^{\mathbf{b}-\varepsilon_i-\varepsilon_j},&
1\leq i\neq j\leq n;\\
X_{-2\varepsilon_i}\mathbf{x}^{\mathbf{b}}&=&
(a_i+b_i)(a_i+b_i-1)\mathbf{x}^{\mathbf{b}-2\varepsilon_i},&
i=1,2,\dots,n;\\
H_{\varepsilon_{i+1}-\varepsilon_i}\mathbf{x}^{\mathbf{b}}&=&
(a_{i+1}+b_{i+1}-a_{i}-b_{i})\mathbf{x}^{\mathbf{b}},&
i=1,2,\dots,n-1;\\
H_{2\varepsilon_1}\mathbf{x}^{\mathbf{b}}&=&
\frac{1}{2}(2a_1+2b_1+1)\mathbf{x}^{\mathbf{b}}.
\end{array}
\end{equation}

\begin{theorem}[\cite{BL}]\label{thm3}
\begin{enumerate}[(i)]
\item\label{thm3.1}
For every $\mathbf{a}\in\mathbb{C}^n$ formulae \eqref{eq0} define on
$N(\mathbf{a})$ the structure of a completely pointed weight
$\mathfrak{g}$-module.
\item\label{thm3.2}
If  $a_i\not\in\mathbb{Z}$ for all $i=1,\dots,n$, then the module
$N(\mathbf{a})$ is simple and cuspidal.
\item\label{thm3.3}
Every completely pointed simple cuspidal $\mathfrak{g}$-module is
isomorphic to $N(\mathbf{a})$ for some $\mathbf{a}\in\mathbb{C}^n$
such that $a_i\not\in\mathbb{Z}$, $i=1,\dots,n$.
\end{enumerate}
\end{theorem}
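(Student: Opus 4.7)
The plan is to prove the three parts in order, with (\ref{thm3.1}) and (\ref{thm3.2}) being essentially direct verifications and (\ref{thm3.3}) being the real content. For part (\ref{thm3.1}), I would observe that \eqref{eq00} realizes $\mathfrak{g}$ as a Lie subalgebra of $W_n$, so it suffices to extend the natural $W_n$-action from polynomial monomials to formal monomials $x_1^{a_1+b_1}\cdots x_n^{a_n+b_n}$ with $\mathbf{b}\in\mathbb{Z}^n$ via the rules $x_i\cdot x_i^c = x_i^{c+1}$ and $\frac{\partial}{\partial x_i}\cdot x_i^c = c\, x_i^{c-1}$. This yields a well-defined $W_n$-module on the full formal space, hence a $\mathfrak{g}$-module. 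I would then check that the root lattice of $C_n$ coincides with $\mathbf{B}$ (every root has even coordinate sum), which guarantees that $N(\mathbf{a})$ is stable under each root vector; the formulas in \eqref{eq0} are simply the restriction of this action. The displayed formulas for $H_\alpha$ show that each $\mathbf{x}^{\mathbf{b}}$ is a weight vector with distinct $\mathbf{b}$ giving distinct weights, so $N(\mathbf{a})$ is completely pointed.

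For part (\ref{thm3.2}), under the hypothesis $a_i\notin\mathbb{Z}$ for all $i$, every coefficient appearing in \eqref{eq0} --- namely $a_j+b_j$, $(a_i+b_i)(a_j+b_j)$, $(a_i+b_i)(a_i+b_i-1)$, and the constant $1$ --- is nonzero for every $\mathbf{b}\in\mathbf{B}$. Hence each root vector acts as a scalar isomorphism between the relevant one-dimensional weight spaces, giving cuspidality. Simplicity then follows because $\mathbf{B}$ is generated additively by the roots, so any weight space can be reached from any other by a finite sequence of nonzero root-vector applications.

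Part (\ref{thm3.3}) is where the real work lies, and I expect it to be the main obstacle. Given a completely pointed simple cuspidal $V$, I would fix $\lambda\in\mathrm{supp}(V)$ (so that $\mathrm{supp}(V)=\lambda+\mathbf{B}$) and choose a nonzero $v_{\mathbf{b}}\in V_{\lambda+\mathbf{b}}$ for each $\mathbf{b}\in\mathbf{B}$. Reading off the eigenvalues of $H_{2\varepsilon_i}$ and matching with \eqref{eq0} produces candidate scalars $a_1,\dots,a_n$, and cuspidality of $V$ forces $a_i\notin\mathbb{Z}$, since otherwise some $X_{-2\varepsilon_i}$ would have a kernel in the relevant $\mathfrak{sl}_2$-chain through $\lambda$. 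The remaining task is to show that the matrix coefficients of all root vectors in the basis $\{v_{\mathbf{b}}\}$ can be normalized to match \eqref{eq0} exactly: this is a rigidity argument in which the commutation relations of $\mathfrak{g}$ constrain the products of structure constants along different paths through the weight lattice, and one checks that the remaining freedom (a one-parameter rescaling of each $v_{\mathbf{b}}$) can be used to make the action agree with \eqref{eq0} compatibly across all of $\mathbf{B}$, yielding the isomorphism $V\cong N(\mathbf{a})$.
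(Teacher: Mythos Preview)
The paper does not prove Theorem~\ref{thm3} at all: it is stated with the citation \cite{BL} and no proof environment follows. The authors simply import the result from Britten--Lemire and proceed directly to Section~\ref{s2}. So there is no ``paper's own proof'' to compare your proposal against.

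That said, your sketch for parts (\ref{thm3.1}) and (\ref{thm3.2}) is correct and essentially the standard argument. For part (\ref{thm3.3}) your outline is reasonable in spirit but remains only a plan: you correctly identify that matching Cartan eigenvalues pins down $\mathbf{a}$ and that cuspidality forces $a_i\notin\mathbb{Z}$, and you correctly locate the real issue in the basis-normalization/rigidity step. However, the actual argument in \cite{BL} that every completely pointed simple weight module arises this way is not short, and your last paragraph does not supply the mechanism that makes the normalization go through (for example, one typically first reduces to an $\mathfrak{sl}_2$-analysis along each long root to fix the ``diagonal'' structure, and then uses the Serre-type relations among root vectors to propagate consistency across $\mathbf{B}$). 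If you intend to give a self-contained proof rather than cite \cite{BL}, that propagation step needs to be written out; as it stands, part (\ref{thm3.3}) is an outline rather than a proof.
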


\section{The functor $\mathrm{F}$}\label{s2}

This section is similar to \cite[Subsection~3.1]{MS}.
Fix $\mathbf{a}\in\mathbb{C}^n$ such that $a_i\not\in\mathbb{Z}$,
$i=1,\dots,n$. Let $\hat{\mathcal{C}}_{\mathbf{a}}$ denote the block of
$\hat{\mathcal{C}}$ containing $N(\mathbf{a})$.
The category $\hat{\mathcal{C}}_{\mathbf{a}}$ is closed under extensions.
Denote by $\mathbb{C}[[t_1,t_2,\dots,t_n]]\text{-}\mathrm{mod}$
the category of finite dimensional
$\mathbb{C}[[t_1,t_2,\dots,t_n]]$-modules. For
$V\in \mathbb{C}[[t_1,t_2,\dots,t_n]]\text{-}\mathrm{mod}$
denote by $T_i$ the linear operator describing the action of
$t_i$ on $V$. Set $\mathbf{0}=(0,0,\dots,0)\in \mathbf{B}$.

For $\mathbf{b}\in\mathbf{B}$ consider a copy $V^{\mathbf{b}}$ of
$V$. Define
\begin{displaymath}
\mathrm{F}V:=\bigoplus_{\mathbf{b}\in\mathbf{B}} V^{\mathbf{b}}.
\end{displaymath}
Define the action of elements from $\mathbf{C}$ on the vector space
$\mathrm{F}V$ in the following way: for $v\in V^{\mathbf{b}}$ set
\begin{equation}\label{eq1}
\begin{array}{lclcl}
X_{\varepsilon_i-\varepsilon_j}v&=&
(T_j+(a_j+b_j)\mathrm{Id}_V)v&\in& V^{\mathbf{b}+
\varepsilon_i-\varepsilon_j};\\
X_{\varepsilon_i+\varepsilon_j}v&=&
v&\in& V^{\mathbf{b}+\varepsilon_i+\varepsilon_j};\\
X_{-\varepsilon_i-\varepsilon_j}v&=&
(T_i+(a_i+b_i)\mathrm{Id}_V)(T_j+(a_j+b_j)\mathrm{Id}_V)
v&\in& V^{\mathbf{b}-\varepsilon_i-\varepsilon_j};\\
X_{2\varepsilon_i}v&=&
(T_i+(a_i+b_i)\mathrm{Id}_V)(T_i+(a_i+b_i-1)\mathrm{Id}_V)
v&\in& V^{\mathbf{b}-2\varepsilon_i};\\
H_{\varepsilon_{i+1}-\varepsilon_i}v&=&
(T_{i+1}-T_i+(a_{i+1}+b_{i+1}-a_i-b_i)\mathrm{Id}_V)
v&\in& V^{\mathbf{b}};\\
H_{2\varepsilon_1}v&=&
\frac{1}{2}(2T_1+(2a_1+2b_1+1)\mathrm{Id}_V)
v&\in& V^{\mathbf{b}};
\end{array}
\end{equation}
where $i$ and $j$ are as in the respective row of \eqref{eq0}.
For a homomorphism $f:V\to W$ of $\mathbb{C}[[t_1,t_2,\dots,t_n]]$-modules
denote by $\mathrm{F}f$ the diagonally extended
linear map from $\mathrm{F}V$ to $\mathrm{F}W$, i.e.
for every $\mathbf{b}\in\mathbf{B}$ and $v\in V^{\mathbf{b}}$ set
\begin{equation}\label{eq2}
\mathrm{F}f(v)=f(v)\in W^{\mathbf{b}}.
\end{equation}

\begin{proposition}\label{prop4}
\begin{enumerate}[(i)]
\item\label{prop4.1} Formulae \eqref{eq1} define on $\mathrm{F}V$
the structure of  a $\mathfrak{g}$-module.
\item\label{prop4.15} Every $V^{\mathbf{b}}$ is a generalized
weight space of $\mathrm{F}V$. Moreover, for
$\mathbf{b}\neq \mathbf{b}'$ the weights of $V^{\mathbf{b}}$
and $V^{\mathbf{b}'}$ are different.
\item\label{prop4.2} The module $\mathrm{F}V$ belongs to
$\hat{\mathcal{C}}_{\mathbf{a}}$.
\item\label{prop4.3} Formulas \eqref{eq1} and \eqref{eq2}
turn $\mathrm{F}$ into a
functor
\begin{displaymath}
\mathrm{F}: \mathbb{C}[[t_1,t_2,\dots,t_n]]\text{-}\mathrm{mod}
\rightarrow\hat{\mathcal{C}}_{\mathbf{a}}.
\end{displaymath}
\item\label{prop4.4} The functor $\mathrm{F}$ is exact, faithful and full.
\end{enumerate}
\end{proposition}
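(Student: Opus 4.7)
The organizing observation is that \eqref{eq1} is obtained from \eqref{eq0} by the substitution $a_i+b_i \mapsto T_i+(a_i+b_i)\mathrm{Id}_V$. Since the $T_i$ pairwise commute, every polynomial identity among the scalars $a_i+b_i$ persists after this substitution, so (i) will follow at once from part~(i) of Theorem~\ref{thm3} applied to $N(\mathbf{a})$: the bracket relations verified there lift verbatim to $\mathrm{F}V$. For (ii) the plan is to use that every $T_i$ acts nilpotently on a finite-dimensional $\mathbb{C}[[t_1,\ldots,t_n]]$-module; the last two rows of \eqref{eq1} then show that the Cartan generators act on $V^{\mathbf{b}}$ as a scalar depending on $\mathbf{b}$ plus a nilpotent operator, exhibiting $V^{\mathbf{b}}$ as a single generalized weight space, and the scalar parts uniquely recover $\mathbf{b}$, so distinct $\mathbf{b}$'s produce distinct weights.

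Part (iii) breaks into three verifications. Cuspidality: in each row of \eqref{eq1} the scalar part of the acting operator equals $1$ or is a product of factors of the form $a_k+b_k$ or $a_k+b_k-1$, all nonzero since $a_k\notin\mathbb{Z}$; together with the nilpotence of the $T_i$ this forces invertibility. Finite-dimensionality of generalized weight spaces is immediate from (ii) and $\dim V<\infty$. Block membership: any composition series of $V$ as a $\mathbb{C}[[t_1,\ldots,t_n]]$-module consists of one-dimensional factors on which the $T_i$ act trivially, so applying $\mathrm{F}$ termwise yields a filtration of $\mathrm{F}V$ whose subquotients are all isomorphic to $N(\mathbf{a})$, which pins down the central character and the support coset. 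Part (iv) is then a routine check: $\mathbb{C}[[t_1,\ldots,t_n]]$-linearity of $f$ is exactly commutation with every $T_i$, which is what is needed for $\mathrm{F}f$ to commute with the operators of \eqref{eq1}.

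The main obstacle lies in (v), specifically fullness. Exactness is immediate since on underlying vector spaces $\mathrm{F}$ is a direct sum of identity functors, and faithfulness follows by restricting $\mathrm{F}f$ to $V^{\mathbf{0}}$. For fullness, given $g\colon\mathrm{F}V\to\mathrm{F}W$, the generalized-weight decomposition from (ii) will force $g$ to restrict to linear maps $g_{\mathbf{b}}\colon V^{\mathbf{b}}\to W^{\mathbf{b}}$. Equivariance with respect to the Cartan --- whose nilpotent parts on $V^{\mathbf{b}}$ are precisely the operators $T_i$ --- will force each $g_{\mathbf{b}}$ to commute with every $T_i$, hence to be $\mathbb{C}[[t_1,\ldots,t_n]]$-linear. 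Equivariance with respect to any root vector $X_\alpha$, invertible by the cuspidality already shown, will then yield $g_{\mathbf{b}+\alpha}=g_{\mathbf{b}}$; since $\mathbb{Z}\Delta=\mathbf{B}$, all of the $g_{\mathbf{b}}$ will coincide with a single $\mathbb{C}[[t_1,\ldots,t_n]]$-linear map $f\colon V\to W$, and by construction $g=\mathrm{F}f$. The delicate part is the interplay at the end: extracting $T_i$-commutativity from the Cartan action and then playing it against the cuspidal invertibility of the off-Cartan root vectors in a way that produces a single well-defined $f$.
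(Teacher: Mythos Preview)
Your proposal is correct and follows essentially the same approach as the paper's proof: the polynomial-identity argument for (i), nilpotence of the $T_i$ for (ii), the composition-series/extension-closure argument for (iii), and for fullness in (v) the restriction of a $\mathfrak{g}$-homomorphism to a single generalized weight space together with Cartan-equivariance to recover the $T_i$-commutativity. The only cosmetic difference is that the paper extracts $f$ from $V^{\mathbf{0}}$ alone and then invokes that $\mathrm{F}f$ and $\varphi$ agree on a generating weight space, whereas you spell out explicitly that all the $g_{\mathbf{b}}$ coincide via the invertible root-vector actions; both arguments amount to the same thing.
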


\begin{proof}
Consider the $\mathfrak{g}$-module $N(\mathbf{a})$ for  $\mathbf{a}$
as above. Then, for every $\mathbf{b}$ the defining relations of
$\mathfrak{g}$ (in terms of elements from $\mathbf{C}$),
applied to $\mathbf{x}^{\mathbf{b}}$, can be written as
some polynomial equations in the $a_i$'s. Since \eqref{eq0} defines a
$\mathfrak{g}$-module for any $\mathbf{a}$ (Theorem~\ref{thm3}\eqref{thm3.1}),
these equations hold for any $\mathbf{a}$, that is they are actually
formal identities in the $a_i$'s.
Write now  $T_j+(a_j+b_j)\mathrm{Id}_V=A_j+B_j$,
a sum of matrices, where $A_j=T_j+a_j\mathrm{Id}_V$ and
$B_j=b_j\mathrm{Id}_V$. Note that all $A_i$ and $B_j$ commute
with each other and with all $T_l$'s. For a fixed $\mathbf{b}$,
the defining relations for $\mathfrak{g}$ on $\mathrm{F}V$  reduce
to our formal identities (in the $A_i$'s) and hence are satisfied.
This proves  claim \eqref{prop4.1}. Claim \eqref{prop4.15} follows
from the the last two lines in \eqref{eq1} and the fact
that all $T_i$'s are nilpotent (hence zero is the only eigenvalue).

As $f$ commutes with all $T_i$, the map $\mathrm{F}f$ commutes with
the action of all elements from $\mathbf{C}$ and hence defines
a homomorphism of $\mathfrak{g}$-modules. By construction we also have
$\mathrm{F}(f\circ f')=\mathrm{F}f\circ \mathrm{F}f'$, which implies
claim \eqref{prop4.3}.

By construction, $\mathrm{F}$ is exact and faithful.
It sends the simple one-dimensional
$\mathbb{C}[[t_1,t_2,\dots,t_n]]$-module to $N(\mathbf{a})$
(as in this case all $T_i=0$ and hence \eqref{eq1} gives \eqref{eq0}),
which is an object of the category $\hat{\mathcal{C}}_{\mathbf{a}}$
closed under extensions. Claim \eqref{prop4.2} follows.

To complete the proof of claim \eqref{prop4.4} we are left to show
that $\mathrm{F}$ is full. Let $\varphi:\mathrm{F}V\to
\mathrm{F}W$ be a $\mathfrak{g}$-homomorphism. Then $\varphi$ commutes
with the action of all elements from $\mathfrak{h}$. Using
claim \eqref{prop4.15}, we get that $\varphi$ induces, by restriction,
a linear map $f:V=V^{\mathbf{0}}\to W^{\mathbf{0}}=W$.
As $\varphi$ commutes with all $H_{\varepsilon_{i+1}-\varepsilon_{i}}$,
the map $f$ commutes with all operators $T_{i+1}-T_{i}$. As $\varphi$
commutes with $H_{2\varepsilon_1}$, the map $f$ commutes with $T_1$.
It follows that $f$ is a homomorphism of
$\mathbb{C}[[t_1,t_2,\dots,t_n]]$-modules.
This yields $\varphi=\mathrm{F}f$ and thus the functor $\mathrm{F}$ is full.
This completes the proof of claim \eqref{prop4.4} and of the whole
proposition.
\end{proof}

\section{Proof of Theorem~\ref{thm1}}\label{s3}

Because of Lemma~\ref{lem2} it is enough to fix one particular
block and show there that $\mathrm{F}$ is an equivalence.
Thus, we may assume that $a_i+a_j\not\in\mathbb{Z}$
for all $i,j$ (in particular, $a_i\not\in\mathbb{Z}$ for all $i$).
After Proposition~\ref{prop4}, we are only left to
show that $\mathrm{F}$ is dense (i.e. essentially surjective).
We establish density of $\mathrm{F}$ by induction on $n$.
We first prove the induction step
and then the basis of the induction, which is the case $n=2$.

Denote by $\lambda$ the weight of $\mathbf{x}^{\mathbf{0}}\in
N(\mathbf{a})$ (see Proposition~\ref{prop4}\eqref{prop4.15}). Let
$M\in \hat{\mathcal{C}}_{\mathbf{a}}$.
Set $V:=M_{\lambda}$ and denote by $M'$ the
$\mathfrak{a}$-module $U(\mathfrak{a})V$.

\subsection{Reduction to the case $n=2$}\label{s3.1}

The main result of this subsection is the following:

\begin{proposition}\label{propind}
If the functor $\mathrm{F}$ is dense for $n=2$, then it is dense
for any $n\geq 2$.
\end{proposition}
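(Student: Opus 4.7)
The plan is to induct on $n$, with the case $n=2$ given by hypothesis. So fix $n\geq 3$, assume the density statement holds for $\mathfrak{sp}_{2(n-1)}$, and take an arbitrary $M\in \hat{\mathcal{C}}_{\mathbf{a}}$. My first move is to observe that the operators $T_i$ on the zero-weight piece of any module in the image of $\mathrm{F}$ are, by the last two lines of \eqref{eq1}, intrinsically determined by the $\mathfrak{h}$-action, namely as the nilpotent parts of $H_{2\varepsilon_1}$ and $H_{\varepsilon_{i+1}-\varepsilon_i}$ acting on that weight space. This lets me equip $V=M_\lambda$ with $n$ operators $T_1,\dots,T_n$ directly from $M$; since $\mathfrak{h}$ is abelian and $V$ is a generalized weight space, these commute and are nilpotent, so $V$ becomes a finite-dimensional $\mathbb{C}[[t_1,\dots,t_n]]$-module, and it is this $V$ that I want $M$ to be the $\mathrm{F}$-image of.

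Next I exploit the rank-$(n-1)$ symplectic subalgebra $\mathfrak{a}\subset\mathfrak{g}$ of type $C_{n-1}$ (for instance, the one whose root system consists of all roots of $\Delta$ not involving $\varepsilon_n$) and the submodule $M'=U(\mathfrak{a})V$. The first thing I need to check is that $M'$ lies in a nontrivial block of the analogous category $\hat{\mathcal{C}}^{\mathfrak{a}}$: cuspidality with respect to $\mathfrak{a}$-root spaces is inherited from $M$, finite-dimensional generalized weight spaces likewise, and the block parameters correspond to the truncated tuple $(a_1,\dots,a_{n-1})$, which still satisfies the genericity hypothesis $a_i+a_j\notin\mathbb{Z}$. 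The inductive hypothesis then produces an isomorphism $M'\cong \mathrm{F}_{\mathfrak{a}}(V)$, and by uniqueness (Proposition~\ref{prop4}\eqref{prop4.15}) the $n-1$ operators on $V$ extracted from this $\mathfrak{a}$-structure coincide with $T_1,\dots,T_{n-1}$ as defined above.

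Finally, I construct a candidate $\mathfrak{g}$-homomorphism $\varphi\colon \mathrm{F}V\to M$ acting as the identity on $V^{\mathbf{0}}\to V=M_\lambda$ and extend by the $\mathfrak{g}$-action on both sides; well-definedness amounts to verifying that the action on $M$ satisfies the formulas in \eqref{eq1}. On root vectors contained in $\mathfrak{a}$ this is exactly the content of the inductive isomorphism. To cover the remaining root vectors, namely $X_{\pm(\varepsilon_i\pm\varepsilon_n)}$ and $X_{\pm 2\varepsilon_n}$, I apply the same inductive argument to a second rank-$(n-1)$ symplectic subalgebra $\mathfrak{b}\subset\mathfrak{g}$, e.g.\ the one whose roots do not involve $\varepsilon_1$; this yields operators $T_2,\dots,T_n$ on $V$ which, by the same intrinsic characterization, coincide with those already fixed. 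Since $\mathfrak{a}$ and $\mathfrak{b}$ together generate $\mathfrak{g}$, this forces the $\mathfrak{g}$-action on $U(\mathfrak{g})V$ to match the $\mathrm{F}V$-action. Surjectivity of $\varphi$ comes from observing that $\mathrm{supp}(M)$ is a single coset modulo $\mathbb{Z}\Delta$ and that cuspidality of $M$ lets one reach every weight from $\lambda$ by applying root vectors, so $M=U(\mathfrak{g})V$; injectivity follows by comparing dimensions of individual generalized weight spaces on both sides, using that $\mathrm{F}$ is faithful and that on $\mathfrak{a}$- (resp.\ $\mathfrak{b}$-) translates of $V$ the map $\varphi$ agrees with the inductive isomorphism. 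The main obstacle is the compatibility check for root vectors outside $\mathfrak{a}$: the two-subalgebra symmetry trick sidesteps having to verify individual bracket relations by hand, but it requires being careful that the block of $U(\mathfrak{b})V$ is again nontrivial and that the $T_i$'s extracted from the two inductive isomorphisms really agree — which is where the intrinsic $\mathfrak{h}$-characterization of $T_i$ does the essential work.
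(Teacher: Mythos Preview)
Your two-subalgebra strategy has a real gap at the last step. The inductive isomorphism $M'\cong\mathrm{F}_{\mathfrak{a}}(V)$ fixes bases and verifies the formulas \eqref{eq1} only for the $\mathfrak{a}$-action on weight spaces with $b_n=0$; the second induction, with $\mathfrak{b}$, does the same for the $\mathfrak{b}$-action on weight spaces with $b_1=0$. Together these cover only the union of two slices $\{b_n=0\}\cup\{b_1=0\}$ of $\mathbf{B}$. For a weight $\mathbf{b}$ with $b_1\neq 0$ and $b_n\neq 0$, neither induction supplies a basis of $M_{\lambda+\mathbf{b}}$ nor tells you how any root vector acts there. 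The fact that $\mathfrak{a}$ and $\mathfrak{b}$ generate $\mathfrak{g}$ would help if you knew the $\mathfrak{a}$- and $\mathfrak{b}$-actions on \emph{all} weight spaces, but you only know them on these slices. Concretely, for $n=3$: to compute $X_{\varepsilon_1+\varepsilon_3}$ on $M_\lambda$ you would write it as a bracket of $X_{\varepsilon_1-\varepsilon_2}\in\mathfrak{a}$ and $X_{\varepsilon_2+\varepsilon_3}\in\mathfrak{b}$, but evaluating that bracket forces you to apply $X_{\varepsilon_1-\varepsilon_2}$ at the weight $(0,1,1)\notin M'$ or $X_{\varepsilon_2+\varepsilon_3}$ at $(1,-1,0)\notin M''$, and neither induction controls those actions. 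So the sentence ``this forces the $\mathfrak{g}$-action on $U(\mathfrak{g})V$ to match the $\mathrm{F}V$-action'' is unsupported.

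The paper closes this gap differently: it uses only one subalgebra $\mathfrak{a}$ and proves a uniqueness statement (Lemma~\ref{lemprop5}) that any $Q\in\hat{\mathcal{C}}_{\mathbf{a}}$ with $Q'=N'$ and the prescribed operator $T_n$ is unique up to isomorphism. The mechanism is to fix bases so that $X_{\varepsilon_n-\varepsilon_{n-1}}$ acts by \eqref{eq1} on all of $Q$, propagate the generators of $\mathfrak{a}$ that commute with $X_{\varepsilon_n-\varepsilon_{n-1}}$ across the $b_n$-direction by commutativity, and then handle the two remaining simple-root generators $X_{\varepsilon_{n-1}-\varepsilon_{n-2}}$ and $X_{\varepsilon_{n-1}-\varepsilon_n}$ by reducing to an $\mathfrak{sl}_3$ situation already treated in \cite{MS}. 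That propagation step---showing that the action known on the slice $b_n=0$ extends uniquely to the whole lattice---is precisely what your argument is missing; the second subalgebra $\mathfrak{b}$ does not substitute for it.
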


\begin{proof}
Assume that $n>2$ and that the functor $\mathrm{F}$ is dense in the case
of the algebra $\mathfrak{sp}_{2n-2}$. We realize $\mathfrak{sp}_{2n-2}$
as the subalgebra $\mathfrak{a}$ of $\mathfrak{g}$ corresponding to
the subset $\mathbf{H}_{n-1}\subset \mathbf{H}$ of simple roots.

Let $Y_1$, $Y_2$,\dots, $Y_n$ be the linear
operators representing the action of the elements $H_{2\varepsilon_1},
H_{\varepsilon_2-\varepsilon_1}$, $H_{\varepsilon_3-\varepsilon_2}$,\dots,
$H_{\varepsilon_n-\varepsilon_{n-1}}$ on $V$, respectively.
Set
\begin{equation}\label{eq3}
\begin{array}{lcl}
T_1&:=&Y_1-\frac{1}{2}(2a_1+1)\mathrm{Id}_V;\\
T_2&:=&Y_2+T_1-(a_2-a_1)\mathrm{Id}_V;\\
T_3&:=&Y_3+T_2-(a_3-a_2)\mathrm{Id}_V;\\
&&\vdots\\
T_n&:=&Y_n+T_{n-1}-(a_n-a_{n-1})\mathrm{Id}_V.
\end{array}
\end{equation}
The $T_i$'s are obviously pairwise commuting nilpotent
linear operators.

The module $M'$ is a cuspidal
generalized weight $\mathfrak{a}$-module with finite-dimensional
weight spaces. Moreover, as all composition subquotients of $M$
are of the form $N(\mathbf{a})$, all composition subquotients of
$M'$ are of the form $N(\mathbf{a})'$, the latter being a completely
pointed simple cuspidal $\mathfrak{a}$-module. By our inductive assumption,
the functor $\mathrm{F}$ is dense in the case of the algebra
$\mathfrak{a}$. Hence $M'\cong N':=\oplus_{\mathbf{b}}V^{\mathbf{b}}$,
where $\mathbf{b}\in\mathbf{B}$ is such that $b_n=0$, and the action of
$\mathfrak{a}$ on $N'$ is given by \eqref{eq1}.

\begin{lemma}\label{lemprop5}
There is a unique  (up to isomorphism) $\mathfrak{g}$-module
$Q\in\hat{\mathcal{C}}_{\mathbf{a}}$ such that $Q'=N'$ and which
gives the linear operator $T_n$ when computed using \eqref{eq3}.
\end{lemma}

\begin{proof}
The existence statement is clear, so we need only to show uniqueness.
Assume that $Q\in\hat{\mathcal{C}}_{\mathbf{a}}$ is such that $Q'=N'$
and the formulae \eqref{eq3}, applied to $Q$, produce the linear operator
$T_n$. Since $a_n\not\in\mathbb{Z}$, the endomorphism
$T_n+(a_n+b_n)\mathrm{Id}_V$ is invertible for all $b_n\in\mathbb{Z}$.
As the action of $X_{\varepsilon_n-\varepsilon_{n-1}}$ on $Q$
is bijective, we can fix a weight basis in $Q$ such that both the
$\mathfrak{a}$-action on $Q'=N'$ and the action of
$X_{\varepsilon_n-\varepsilon_{n-1}}$ on the whole $Q$ is given by
\eqref{eq1}. As $n>2$, the elements $X_{\pm 2\varepsilon_1}$ commute
with $X_{\varepsilon_n-\varepsilon_{n-1}}$ and hence their action
extends uniquely to the whole of $Q$ using this commutativity.
Similarly for all elements $X_{\pm(\varepsilon_i-\varepsilon_{i-1})}$,
$i<n-1$, and for the element $X_{\varepsilon_{n-2}-\varepsilon_{n-1}}$.
This leaves us with the elements $X_{\varepsilon_{n-1}-\varepsilon_{n-2}}$
and $X_{\varepsilon_{n-1}-\varepsilon_{n}}$. Note that the simple roots
$\varepsilon_{n-1}-\varepsilon_{n-2}$ and $\varepsilon_{n}-\varepsilon_{n-1}$
corresponding to the elements $X_{\varepsilon_{n-1}-\varepsilon_{n-2}}$
and $X_{\varepsilon_n-\varepsilon_{n-1}}$ generate a root system of type
$A_2$ (this corresponds to the algebra $\mathfrak{sl}_3$).
Therefore the fact that the action of
$X_{\varepsilon_{n-1}-\varepsilon_{n-2}}$ extends uniquely to $Q$ is
proved in \cite[Lemma~21]{MS}, and the fact that the action of
$X_{\varepsilon_{n-1}-\varepsilon_{n}}$ extends uniquely to $Q$ is proved
in \cite[Lemma~22]{MS}. This completes the proof.
\end{proof}

The module $\mathrm{F}V$ obviously satisfies $(\mathrm{F}V)'=N'$
and defines the linear operator $T_n$ when computed using \eqref{eq3}.
Hence Lemma~\ref{lemprop5} implies $M\cong \mathrm{F}V$.
Since $M\in \hat{\mathcal{C}}_{\mathbf{a}}$ was arbitrary, this
shows that the functor $\mathrm{F}$ is dense, completing the proof.
\end{proof}

\subsection{Base of the induction: some $\mathfrak{sl}_2$-theory
as preparation}\label{s3.15}

In this subsection we will recall (and slightly improve) some classical
$\mathfrak{sl}_2$-theory. We refer the reader to \cite{Maz} for
more details. Consider the Lie algebra $\mathfrak{sl}_2=
\mathfrak{sl}_2(\mathbb{C})$ with standard basis
\begin{displaymath}
\mathbf{e}:=\left(\begin{array}{cc}0&1\\0&0\end{array}\right),\quad
\mathbf{f}:=\left(\begin{array}{cc}0&0\\1&0\end{array}\right),\quad
\mathbf{h}:=\left(\begin{array}{cc}1&0\\0&-1\end{array}\right).
\end{displaymath}

Let $V$ be a finite-dimensional vector space and
$A$ and $B$ be two commuting linear operators on $V$. For
$i\in \mathbb{Z}$ denote by $V^{(i)}$ a copy of $V$ and consider
the vector space $\overline{V}:=\oplus_{i\in\mathbb{Z}}V^{(i)}$
(a direct sum of copies of $V$ indexed by $i$).
Define the actions of $\mathbf{e}$, $\mathbf{f}$ and $\mathbf{h}$
on $\overline{V}$ as follows: for $v\in V^{(i)}$ set
\begin{equation}\label{eqsl2}
\begin{array}{rclcl}
\mathbf{e}v&:=&(P-i\mathrm{Id}_V)v&\in& V^{(i+1)}\\
\mathbf{f}v&:=&(Q+i\mathrm{Id}_V)v&\in& V^{(i-1)}\\
\mathbf{h}v&:=&(Q-P+2i\mathrm{Id}_V)v&\in& V^{(i)}.
\end{array}
\end{equation}
This can be depicted as follows (here right arrows  represent the
action of $\mathbf{e}$, left arrows  represent the
action of $\mathbf{f}$ and loops  represent the
action of $\mathbf{h}$):
\begin{displaymath}
\xymatrix{
\dots\ar@/^/[rr]^{P+2\mathrm{Id}_V}&&
V^{(-1)}\ar@/^/[rr]^{P+\mathrm{Id}_V}\ar@/^/[ll]^{Q-\mathrm{Id}_V}
\ar@(dl,dr)[]_{Q-P-2\mathrm{Id}_V}
&&V^{(0)}\ar@/^/[ll]^{Q}\ar@/^/[rr]^{P}\ar@(dl,dr)[]_{Q-P}
&&V^{(1)}\ar@/^/[rr]^{P-\mathrm{Id}_V}\ar@/^/[ll]^{Q+\mathrm{Id}_V}
\ar@(dl,dr)[]_{Q-P+2\mathrm{Id}_V}&&\dots\ar@/^/[ll]^{Q+2\mathrm{Id}_V}
}
\end{displaymath}

\begin{proposition}\label{propsl2}
\begin{enumerate}[(i)]
\item\label{propsl2.1} Formulae \eqref{eqsl2} define on
$\overline{V}$ the structure of a generalized weight
$\mathfrak{sl}_2$-module with finite dimensional generalized weight
spaces.
\item\label{propsl2.2} Every cuspidal generalized weight
$\mathfrak{sl}_2$-module with finite dimensional generalized
weight spaces is isomorphic to $\overline{V}$ for some
$V$ with $P$ and $Q$ as above.
\item\label{propsl2.3} The action of the Casimir element
$\mathbf{c}:=(\mathbf{h}+1)^2+4\mathbf{f}\mathbf{e}$ on
$\overline{V}$ is given by the linear operator $(P+Q+\mathrm{Id}_V)^2$.
\item\label{propsl2.4} Let $\mathbb{C}^2$ denote the natural
$\mathfrak{sl}_2$-module (the unique two-dimensional simple
$\mathfrak{sl}_2$-module). Then the linear operator
$(\mathbf{c}-(P+Q+2\mathrm{Id}_V)^2)(\mathbf{c}-(P+Q)^2)$
annihilates the $\mathfrak{sl}_2$-module $\mathbb{C}^2\otimes \overline{V}$.
\item\label{propsl2.5} Let $\mathbb{C}^3$ denote the unique
three-dimensional simple $\mathfrak{sl}_2$-module.
Then the linear operator
$(\mathbf{c}-(P+Q+3\mathrm{Id}_V)^2)
(\mathbf{c}-(P+Q+\mathrm{Id}_V)^2)
(\mathbf{c}-(P+Q-\mathrm{Id}_V)^2)$ annihilates the
$\mathfrak{sl}_2$-module $\mathbb{C}^3\otimes \overline{V}$.
\end{enumerate}
\end{proposition}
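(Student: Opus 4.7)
My plan treats the five parts in groups according to the type of argument. Parts~(i) and~(iii) are direct polynomial verifications that rely on $[P,Q]=0$, together with a single telescoping identity
\begin{equation*}
4(P-j)(Q+j+1) = (P+Q+1)^2 - (Q-P+2j+1)^2, \qquad j\in\mathbb{Z},
\end{equation*}
which one proves by a one-line factorisation of the difference of squares. For~(i), each of the three defining relations of $\mathfrak{sl}_2$ reduces on $V^{(i)}$ to an algebraic identity in $P$, $Q$, $i$; for example
\[
[\mathbf{e},\mathbf{f}]v = \bigl((P-i+1)(Q+i) - (Q+i+1)(P-i)\bigr)v = (Q-P+2i)v = \mathbf{h}v,
\]
and the $[\mathbf{h},\mathbf{e}]$, $[\mathbf{h},\mathbf{f}]$ relations are immediate from the diagonal form of the $\mathbf{h}$-action. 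Finite-dimensionality of each generalized weight space follows from decomposing $V$ into generalized eigenspaces of the commuting operator $Q-P$: only finitely many of the $V^{(i)}$ contribute to any fixed weight. For~(iii), direct computation on $V^{(i)}$ using $\mathbf{h}|_{V^{(i)}}=Q-P+2i$ and $\mathbf{f}\mathbf{e}|_{V^{(i)}}=(Q+i+1)(P-i)$ gives $(Q-P+2i+1)^2+4(Q+i+1)(P-i)=(P+Q+1)^2$ by the telescoping identity above, with the $i$-dependence cancelling.

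For~(ii) I would follow the classical structure theorem from \cite{Maz}. After decomposing $M$ along $\mathbf{h}$-weight cosets, reduce to $\mathrm{supp}(M)\subseteq \mu_0+2\mathbb{Z}$ and set $V:=M^{\mu_0}$. Cuspidality gives bijectivity of $\mathbf{e}$ and $\mathbf{f}$ between adjacent generalized weight spaces, and one inductively builds linear isomorphisms $\psi_i:V\to M^{\mu_0+2i}$ so that the action of $\mathbf{e}$ from $V^{(i)}$ to $V^{(i+1)}$ is represented, via these identifications, by a fixed operator of the form $P-i\,\mathrm{Id}_V$; then $Q$ is determined by $Q-P=\mathbf{h}|_V-\mu_0\mathrm{Id}_V$, and $[P,Q]=0$ is forced by $[\mathbf{h},\mathbf{e}]=2\mathbf{e}$ transported through the $\psi_i$'s.

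For~(iv) and~(v) the key tool is the Hopf-algebra coproduct identity
\begin{equation*}
\Delta(\mathbf{c}) = \mathbf{c}\otimes 1 + 1\otimes\mathbf{c} - 1 + 2\mathbf{h}\otimes\mathbf{h} + 4\mathbf{e}\otimes\mathbf{f} + 4\mathbf{f}\otimes\mathbf{e},
\end{equation*}
obtained by expanding $\Delta((\mathbf{h}+1)^2+4\mathbf{f}\mathbf{e})$. For~(iv), the relevant generalized weight space of $\mathbb{C}^2\otimes\overline{V}$ is $v_+\otimes V^{(i)}\oplus v_-\otimes V^{(i+1)}$, where $v_\pm$ is the standard basis of $\mathbb{C}^2$; plugging in together with $\mathbf{c}|_{\mathbb{C}^2}=4$ and~(iii) yields a $2\times 2$ matrix with entries in the commutative subring $\mathbb{C}[P,Q]\subseteq\mathrm{End}(V)$. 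One computes that its trace is $(P+Q+2)^2+(P+Q)^2$ and that its determinant, after invoking the telescoping identity, equals $(P+Q+2)^2(P+Q)^2$, so its characteristic polynomial is $(x-(P+Q+2)^2)(x-(P+Q)^2)$; Cayley--Hamilton for matrices over a commutative ring gives the stated annihilation. For~(v) the same recipe applied to $\mathbb{C}^3\otimes\overline{V}$ (using $\mathbf{c}|_{\mathbb{C}^3}=9$) produces a tridiagonal $3\times 3$ matrix over $\mathbb{C}[P,Q]$ acting on the block $u_1\otimes V^{(i)}\oplus u_2\otimes V^{(i+1)}\oplus u_3\otimes V^{(i+2)}$, where $u_1,u_2,u_3$ is the standard basis of $\mathbb{C}^3$ of $\mathbf{h}$-weights $2,0,-2$; one verifies that its trace, sum of principal $2\times 2$ minors, and determinant coincide with the elementary symmetric functions of $(P+Q+3)^2$, $(P+Q+1)^2$, $(P+Q-1)^2$ (namely $3(P+Q+1)^2+8$ and $3(P+Q+1)^4+16$ for the first two, with the determinant requiring the telescoping identity at both $j=i$ and $j=i+1$).

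The main obstacle is the bookkeeping in~(v): the three diagonal blocks carry different shifts $i,i+1,i+2$, so the telescoping identity must be applied at two different values to handle the two off-diagonal pairs, and the cross-terms produced by the $4\mathbf{e}\otimes\mathbf{f}+4\mathbf{f}\otimes\mathbf{e}$ part of $\Delta(\mathbf{c})$ must cancel cleanly against the diagonal contributions. Once the three symmetric functions are matched, Cayley--Hamilton over $\mathbb{C}[P,Q]$ concludes.
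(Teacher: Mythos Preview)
Your proposal is correct, and for parts (i)--(iv) it follows essentially the same route as the paper: direct polynomial verification for (i) and (iii), a reference to the classical classification in \cite{Maz} for (ii), and for (iv) the computation of the $2\times 2$ block matrix of $\mathbf{c}$ on a generalized weight space of $\mathbb{C}^2\otimes\overline{V}$ followed by Cayley--Hamilton. The only cosmetic difference in (iv) is that you obtain the matrix of $\mathbf{c}$ via the coproduct identity for $\Delta(\mathbf{c})$, whereas the paper first writes down the $2\times 2$ matrices for $\mathbf{e}$ and $\mathbf{f}$ explicitly and then composes.

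For part (v) the two arguments genuinely diverge. You carry out the full $3\times 3$ computation: build the tridiagonal block matrix of $\mathbf{c}$ on $u_1\otimes V^{(i)}\oplus u_2\otimes V^{(i+1)}\oplus u_3\otimes V^{(i+2)}$, match its three elementary symmetric invariants against those of $(P+Q+3)^2,(P+Q+1)^2,(P+Q-1)^2$, and invoke Cayley--Hamilton over $\mathbb{C}[P,Q]$. The paper instead avoids this calculation entirely by using the decomposition $\mathbb{C}^2\otimes\mathbb{C}^2\cong\mathbb{C}^3\oplus\mathbb{C}$: since $\mathbb{C}^3\otimes\overline{V}$ is a direct summand of $\mathbb{C}^2\otimes(\mathbb{C}^2\otimes\overline{V})$, two successive applications of (iv) give the cubic annihilator. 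The paper's shortcut is slicker and scales (in principle) to higher tensor powers, but it tacitly requires either a specialization argument to scalar $P,Q$ or the observation that the intermediate module $\mathbb{C}^2\otimes\overline{V}$ again fits the framework of (iv); your direct computation is heavier but fully self-contained and sidesteps that point.
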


\begin{proof}
The fact that $\overline{V}$ is an $\mathfrak{sl}_2$-module is checked by
a direct computation. That $\overline{V}$ is a generalized weight
module follows from the fact that the action of $\mathbf{h}$ on
$\overline{V}$ preserves (by \eqref{eqsl2}) each $V^{i}$ and hence is
locally finite. Since the category of generalized weight modules is
closed under extensions, to prove that $\overline{V}$ has finite
dimensional generalized weight spaces it is enough to consider the
case when $\mathbf{h}$ has a unique eigenvalue on $V^{(0)}$, say $\lambda$.
However, in this case $\mathbf{h}$ has a unique eigenvalue on $V^{i}$,
namely $\lambda+2i$, which implies that $\overline{V}^{\lambda}=V$
is finite dimensional. Claim \eqref{propsl2.1} follows. To prove
Claim \eqref{propsl2.3} we observe that the action of
$\mathbf{c}$ on $V^{i}$ is given by:
\begin{multline*}
(Q-P+(2i+1) \mathrm{Id}_V)^2+4(Q+(i+1)\mathrm{Id}_V)(P-i\mathrm{Id}_V)
=(P+Q+\mathrm{Id}_V)^2.
\end{multline*}
Claim  \eqref{propsl2.2} can be found
with all details in \cite[Chapter~3]{Maz}.

To prove claim \eqref{propsl2.4} choose a basis
$\{v_1,\dots,v_k\}$ in $V$, which gives
rise to a basis $\{v^{(i)}_1,\dots,v^{(i)}_k,i\in\mathbb{Z}\}$
in $\overline{V}$. Choose the standard
basis  $\{e_1,e_2\}$ in $\mathbb{C}^2$. Since $\mathbf{h}e_1=e_1$,
$\mathbf{h}e_2=-e_2$ and $\mathbf{h}$ acts  by $Q-P+2i\mathrm{Id}_V$
on $V^{(i)}$, we obtain that $\mathbf{h}$ acts  by
$Q-P+(2i+1)\mathrm{Id}_V$ on the vector space $W^{(i)}$ with basis
\begin{displaymath}
\{e_1\otimes v^{(i)}_1,\dots,e_1\otimes v^{(i)}_1,
e_2\otimes v^{(i+1)}_1,\dots,e_2\otimes v^{(i+1)}_1\}.
\end{displaymath}
We have $\mathbb{C}^2\otimes \overline{V}\cong \oplus_{i\in\mathbb{Z}}
W^{(i)}$ and one easily computes that in the above basis
the actions of $\mathbf{e}$ and $\mathbf{f}$ on
$\mathbb{C}^2\otimes \overline{V}$ is given by the following picture:
\begin{displaymath}
\xymatrix{
\dots\ar@/^/[rr]&&W^{(-1)}\ar@/^/[rr]^{\text{\tiny$
\left(\begin{array}{cc}P\text{+}\mathrm{Id}&\mathrm{Id}
\\0&P\end{array}\right)$}}\ar@/^/[ll]
&&W^{(0)}\ar@/^/[ll]^{\text{\tiny$\left(\begin{array}{cc}Q&0
\\\mathrm{Id}&Q\text{+}\mathrm{Id}\end{array}\right)$}}
\ar@/^/[rr]^{\text{\tiny$\left(\begin{array}{cc}P&\mathrm{Id}
\\0&P\text{-}\mathrm{Id}\end{array}\right)$}}
&&W^{(1)}\ar@/^/[rr]\ar@/^/[ll]^{\text{\tiny$\left(
\begin{array}{cc}Q\text{+}\mathrm{Id}&0
\\\mathrm{Id}&Q\text{+}2\mathrm{Id}\end{array}\right)$}}
&&\dots\ar@/^/[ll]
}
\end{displaymath}
The action of $\mathbf{c}$ on $W^{(0)}$ is now easily computed to be
given by the linear operator
{\tiny
\begin{displaymath}
G:= \left(
\begin{array}{cc}
(Q-P+2\mathrm{Id})^2+4(Q+\mathrm{Id})P& 4(Q+\mathrm{Id})\\
4P& (Q-P+2\mathrm{Id})^2+4(Q+2\mathrm{Id})(P-\mathrm{Id})+4\mathrm{Id}
\end{array}\right).
\end{displaymath}
}
The characteristic polynomial of $G$ is
\begin{displaymath}
\chi_G(\lambda)=(\lambda-(P+Q+2\mathrm{Id})^2)(\lambda-(P+Q)^2).
\end{displaymath}
Claim \eqref{propsl2.4} now follows from the Cayley-Hamilton theorem.

We have an isomorphism of $\mathfrak{sl}_2$-modules as follows:
$\mathbb{C}^2\otimes \mathbb{C}^2\cong \mathbb{C}^3\oplus \mathbb{C}$
(here $\mathbb{C}$ is the trivial module), and hence claim \eqref{propsl2.5}
follows applying claim \eqref{propsl2.4} twice. Alternatively, one could
do a direct calculation (similar to the proof of
\eqref{propsl2.3}). The proposition follows.
\end{proof}

We note that the statement of Proposition~\ref{propsl2}\eqref{propsl2.2}
is a special case of a more general result of Gabriel and Drozd describing
blocks of the category of (generalized) weight $\mathfrak{sl}_2$-modules,
in particular, simple weight $\mathfrak{sl}_2$-modules
(see \cite[7.8.16]{Di} and \cite{Dr}). The statements of Proposition~\ref{propsl2}\eqref{propsl2.4}
and \eqref{propsl2.5} are $\mathfrak{sl}_2$-refinements of a theorem
of Kostant describing possible (generalized) central characters of the
tensor product of a finite dimensional module with an infinite dimensional
module (\cite[Theorem~5.1]{Ko}).

\subsection{The case $n=2$}\label{s3.2}

Assume now that $n=2$. We have $a_1,a_2,a_1+a_2\not\in\mathbb{Z}$.
Let $\mathfrak{a}$ denote the Lie subalgebra of $\mathfrak{g}$
generated by $X_{\pm(\varepsilon_2-\varepsilon_1)}$. The algebra
$\mathfrak{a}$ is isomorphic to $\mathfrak{sl}_2$.

Let $M\in\hat{\mathcal{C}}_{\mathbf{a}}$. Denote by $\lambda$ the weight
of $\mathbf{x}^{\mathbf{0}}\in N(\mathbf{a})$ and set $V:=M_{\lambda}$.
Let $Y_1$ and $Y_2$ be the linear operators representing the actions of
the elements $H_{\varepsilon_2-\varepsilon_1}$ and
$C:=(H_{\varepsilon_2-\varepsilon_1}+1)^2+
4X_{\varepsilon_1-\varepsilon_2}X_{\varepsilon_2-\varepsilon_1}$
on $V$. The element $C$ is a Casimir element for $\mathfrak{a}$,
in particular, the operators $Y_1$ and $Y_2$ commute.
Our first observation is the following:

\begin{lemma}\label{lem7}
The action of $C$ on $V$ is invertible and hence has a square root.
\end{lemma}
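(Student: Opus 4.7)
The strategy is to show that $C$ acts on $V$ with a single nonzero eigenvalue; both invertibility and existence of a square root then follow immediately. Step one is a direct calculation on $N(\mathbf{a})$: the one-dimensional space $N(\mathbf{a})_\lambda$ is spanned by $\mathbf{x}^{\mathbf{0}}$, and \eqref{eq0} gives $H_{\varepsilon_2-\varepsilon_1}\mathbf{x}^{\mathbf{0}}=(a_2-a_1)\mathbf{x}^{\mathbf{0}}$ together with $X_{\varepsilon_1-\varepsilon_2}X_{\varepsilon_2-\varepsilon_1}\mathbf{x}^{\mathbf{0}}=a_1(a_2+1)\mathbf{x}^{\mathbf{0}}$. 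Adding these contributions,
$C\mathbf{x}^{\mathbf{0}}=\bigl((a_2-a_1+1)^2+4a_1(a_2+1)\bigr)\mathbf{x}^{\mathbf{0}}=(a_1+a_2+1)^2\mathbf{x}^{\mathbf{0}}$.

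Step two transfers this eigenvalue to all of $V$. Since $\hat{\mathcal{C}}_{\mathbf{a}}$ has $N(\mathbf{a})$ as its unique simple object and the generalized weight spaces of $M$ are finite dimensional, I fix a composition series $0=M_0\subset M_1\subset\cdots\subset M_k=M$ with each $M_i/M_{i-1}\cong N(\mathbf{a})$, and set $V_i:=(M_i)_\lambda$. The identity $V_i\cap M_{i-1}=V_{i-1}$ produces an injection $V_i/V_{i-1}\hookrightarrow (M_i/M_{i-1})_\lambda\cong\mathbb{C}$. Both summands of $C$ have $\mathfrak{h}$-weight zero, so $C$ commutes with $\mathfrak{h}$, preserves every $V_i$, and acts on each nonzero subquotient $V_i/V_{i-1}$ by the scalar $(a_1+a_2+1)^2$ computed above. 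Therefore $\bigl(C-(a_1+a_2+1)^2\mathrm{Id}_V\bigr)^{\dim V}$ vanishes on $V$.

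Because $a_1+a_2\notin\mathbb{Z}$, the scalar $(a_1+a_2+1)^2$ is nonzero, so $C|_V$ is invertible. Writing $C|_V=(a_1+a_2+1)^2\mathrm{Id}_V+N$ with $N$ nilpotent, a square root is furnished by the terminating series $(a_1+a_2+1)\sum_{j\geq 0}\binom{1/2}{j}(a_1+a_2+1)^{-2j}N^j$. The only step that is not pure bookkeeping is the filtration argument for the eigenvalue, which is standard once one knows that every composition factor of $M$ coincides with $N(\mathbf{a})$ and that the true weight functor $(-)_\lambda$ is left exact; so I anticipate no real obstacle.
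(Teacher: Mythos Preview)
Your argument is correct and follows essentially the same approach as the paper: compute directly that $C$ acts on $\mathbf{x}^{\mathbf{0}}$ by the scalar $(a_1+a_2+1)^2$, then use the fact that every composition factor of $M$ is isomorphic to $N(\mathbf{a})$ to conclude this is the unique eigenvalue of $C$ on $V$, whence invertibility (and a square root) follow since $a_1+a_2\notin\mathbb{Z}$. The paper's version is merely terser; your added detail on the filtration and the explicit binomial-series square root is fine but not needed.
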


\begin{proof}
From \eqref{eq0} we have that $C$ acts on $\mathbf{x}^{\mathbf{0}}$
by
\begin{displaymath}
(a_2-a_1+1)^2+4(a_2+1)a_1=(a_1+a_2+1)^2.
\end{displaymath}
Since $a_1+a_2\not\in\mathbb{Z}$ by our assumptions,
$\mathbf{x}^{\mathbf{0}}$ is an eigenvector of $C$ with a nonzero
eigenvalue. As the module $M$ has a composition series with subquotients
isomorphic to $N(\mathbf{a})$, the complex number $(a_1+a_2+1)^2\neq 0$
is the only eigenvalue of $C$ on $V$. The claim follows.
\end{proof}

Consider the $\mathfrak{a}$-module $M':=U(\mathfrak{a})M_{\lambda}$.
Let $Y'_2$ denote any square root of $Y_2$, which is a polynomial in
$Y_2$ (it exists by Lemma~\ref{lem7}). Then $Y'_2$ commutes with $Y_1$.
Set
\begin{displaymath}
T_1:=\frac{Y'_2-Y_1-\mathrm{Id}_V}{2}-a_1\mathrm{Id}_V,\quad
T_2:=\frac{Y'_2+Y_1-\mathrm{Id}_V}{2}-a_2\mathrm{Id}_V.
\end{displaymath}
Then $T_1$ and $T_2$ are two commuting nilpotent linear operators
(it is easy to check that $0$ is the unique eigenvalue for both
$T_1$ and $T_2$), hence define on $V$ the structure of
a $\mathbb{C}[[t_1,t_2]]$-module. The aim of this subsection is to
establish an isomorphism $\mathrm{F}V\cong M$, which
would complete the proof of Theorem~\ref{thm1}.

Set $R':=U(\mathfrak{a})(\mathrm{F}V)_{\lambda}$.
A direct computation (using \eqref{eq1}) shows that
$H_{\varepsilon_2-\varepsilon_1}$ and $C$ act on
$(\mathrm{F}V)_{\lambda}=V^{\mathbf{0}}$ as the linear operators $Y_1$
and $Y_2$, respectively. As any cuspidal generalized weight
$\mathfrak{a}$-module is uniquely determined by the actions of
$H_{\varepsilon_2-\varepsilon_1}$ and $C$ (see \cite{Dr} or
\cite[3.7]{Maz} for full details), it follows that $M'\cong R'$.
The isomorphism $\mathrm{F}V\cong M$ now follows from the
following statement:

\begin{proposition}
There is at most one (up to isomorphism) $\mathfrak{g}$-module
$R\in\hat{\mathcal{C}}_{\mathbf{a}}$ such that
$U(\mathfrak{a})R_{\lambda}=R'$.
\end{proposition}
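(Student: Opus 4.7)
The plan mirrors Lemma~\ref{lemprop5}, but, since $\mathfrak{a}=\mathfrak{sl}_{2}$ does not commute with the remaining root generators of $\mathfrak{g}=\mathfrak{sp}_{4}$, the commutativity-based propagation must be replaced by one using the $\mathfrak{sl}_{2}$-theory of Section~\ref{s3.15}. I first record three preliminary observations. Cuspidality forces $R=U(\mathfrak{g})R_{\lambda}$ with $\dim R_{\mu}=\dim V$ for every $\mu$ in the support. A direct computation in the Weyl-algebra realization expresses each $X_{\pm 2\varepsilon_{i}}$ as a commutator of an element of $\mathfrak{a}$ with $X_{\pm(\varepsilon_{1}+\varepsilon_{2})}$ (e.g.\ $X_{2\varepsilon_{1}}=[X_{\varepsilon_{1}-\varepsilon_{2}},X_{\varepsilon_{1}+\varepsilon_{2}}]$ and $X_{-2\varepsilon_{1}}=[X_{-\varepsilon_{1}-\varepsilon_{2}},X_{\varepsilon_{2}-\varepsilon_{1}}]$), so determining $R$ reduces to pinning down the action of $X_{\pm(\varepsilon_{1}+\varepsilon_{2})}$. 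Finally, the element $Z:=2H_{2\varepsilon_{1}}+H_{\varepsilon_{2}-\varepsilon_{1}}$ commutes with $\mathfrak{a}$, satisfies $[Z,X_{\varepsilon_{1}+\varepsilon_{2}}]=2X_{\varepsilon_{1}+\varepsilon_{2}}$ and $[X_{-\varepsilon_{1}-\varepsilon_{2}},X_{\varepsilon_{1}+\varepsilon_{2}}]=Z$, and acts on $V=R_{\lambda}$ as $P+Q+\mathrm{Id}_{V}$ from Proposition~\ref{propsl2}\eqref{propsl2.2}; hence $Z|_{V}$ is fully determined by the $\mathfrak{a}$-structure on $R'$.

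Given two candidates $R,\tilde R\in\hat{\mathcal{C}}_{\mathbf{a}}$ and a fixed $\mathfrak{a}$-isomorphism $\varphi_{0}\colon R'\to\tilde R'$, I would extend $\varphi_{0}$ to a $\mathfrak{g}$-isomorphism $\varphi\colon R\to\tilde R$ one $\mathfrak{a}$-orbit at a time. Decompose $R|_{\mathfrak{a}}=\bigoplus_{\ell\in\mathbb{Z}}\overline{V_{\ell}}$, indexed by the cosets of $\mathbb{Z}(\varepsilon_{2}-\varepsilon_{1})$ in the support, with $\overline{V_{0}}=R'$ and $V_{\ell}:=R_{\lambda+\ell(\varepsilon_{1}+\varepsilon_{2})}$ a chosen basepoint; similarly for $\tilde R$. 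Inductively on $|\ell|$, declare $\varphi(X_{\varepsilon_{1}+\varepsilon_{2}}u):=X_{\varepsilon_{1}+\varepsilon_{2}}\varphi(u)$ on the basepoint for $\ell\geq 0$ (using bijectivity of $X_{\varepsilon_{1}+\varepsilon_{2}}$) and the analogous rule with $X_{-\varepsilon_{1}-\varepsilon_{2}}$ for $\ell<0$; then extend $\mathfrak{a}$-equivariantly across $\overline{V_{\ell}}$. The essential coherence is that the $\mathfrak{a}$-module structure on $\overline{V_{\ell}}$ is the same in either candidate: using $[h,X_{\varepsilon_{1}+\varepsilon_{2}}]=0$ and $[Z,X_{\varepsilon_{1}+\varepsilon_{2}}]=2X_{\varepsilon_{1}+\varepsilon_{2}}$, the operators $(P_{\ell},Q_{\ell})$ on $V_{\ell}$, transported via $X_{\varepsilon_{1}+\varepsilon_{2}}^{\ell}$, become $(P_{0}+\ell\,\mathrm{Id}_{V},Q_{0}+\ell\,\mathrm{Id}_{V})$ regardless of whether one starts in $R$ or $\tilde R$. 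This is the module-theoretic content of Proposition~\ref{propsl2}\eqref{propsl2.5}: the cubic constraint on $\mathbf{c}$ on $\mathbb{C}^{3}\otimes\overline{V_{\ell}}$ pins down its eigenvalue on $\overline{V_{\ell+1}}$, and the canonical square root $P_{\ell}+Q_{\ell}+\mathrm{Id}_{V}$ of $\mathbf{c}$ is selected using $a_{1}+a_{2}\notin\mathbb{Z}$.

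It remains to check that $\varphi$ intertwines $X_{-\varepsilon_{1}-\varepsilon_{2}}$; all remaining generators are then intertwined automatically via the commutator expressions from the first paragraph. Applying $\varphi$ to both sides of $X_{-\varepsilon_{1}-\varepsilon_{2}}X_{\varepsilon_{1}+\varepsilon_{2}}u=X_{\varepsilon_{1}+\varepsilon_{2}}X_{-\varepsilon_{1}-\varepsilon_{2}}u+Zu$, the inductive hypothesis together with $\mathfrak{a}$-equivariance of $\varphi$ (which forces $Z$-equivariance, since $Z|_{V_{\ell}}$ is determined by the $\mathfrak{a}$-structure) yields $X_{\varepsilon_{1}+\varepsilon_{2}}\bigl(\varphi\circ X_{-\varepsilon_{1}-\varepsilon_{2}}-X_{-\varepsilon_{1}-\varepsilon_{2}}\circ\varphi\bigr)=0$; bijectivity of $X_{\varepsilon_{1}+\varepsilon_{2}}$ closes the induction. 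The main technical obstacle is the coherent gluing of the upward and downward transports, and the crucial algebraic input making this work is the commutator identity $[X_{-\varepsilon_{1}-\varepsilon_{2}},X_{\varepsilon_{1}+\varepsilon_{2}}]=Z$ combined with the pinned-down action of $Z$, which encapsulates the central-character constraint imposed by membership in the block $\hat{\mathcal{C}}_{\mathbf{a}}$.
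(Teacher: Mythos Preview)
Your overall architecture---propagate from the known $\mathfrak{a}$-strip $R'$ to all of $R$ via a bijective root vector, using Proposition~\ref{propsl2}\eqref{propsl2.5} to control the $\mathfrak{a}$-Casimir on adjacent strips---is close in spirit to the paper's argument (which propagates via $X_{2\varepsilon_{1}}$ rather than $X_{\varepsilon_{1}+\varepsilon_{2}}$, exploiting $[X_{2\varepsilon_{1}},X_{\varepsilon_{1}-\varepsilon_{2}}]=0$ and carrying out two explicit computations, Lemmata~\ref{lem1002} and~\ref{lem1003}). However, your proof has a genuine gap at the point where you assert that $Z|_{V}$ is determined by the $\mathfrak{a}$-structure on $R'$.

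The element $Z=2H_{2\varepsilon_{1}}+H_{\varepsilon_{2}-\varepsilon_{1}}$ lies in $\mathfrak{h}$ but \emph{not} in $\mathfrak{a}$; Proposition~\ref{propsl2}\eqref{propsl2.2} says nothing about it. What is determined by the $\mathfrak{a}$-module $R'$ is the action of the Casimir $C$, which equals $(P+Q+\mathrm{Id}_{V})^{2}$. You are implicitly using $Z^{2}|_{V}=C|_{V}$, but this identity, while valid in the Weyl algebra $W_{2}$ (hence on every simple subquotient $N(\mathbf{a})$), only yields that $Z^{2}-C$ is \emph{nilpotent} on $V$, not that it vanishes. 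A priori the nilpotent part of $Z|_{V}$ is an arbitrary $\mathfrak{a}$-endomorphism of $R'$ with the correct eigenvalue, and nothing in your preliminary observations rules out extra choices. This matters twice: you use the putative value of $Z$ both to pin down $(P_{\ell},Q_{\ell})$ on neighbouring strips and, crucially, to force $\varphi$ to intertwine $X_{-\varepsilon_{1}-\varepsilon_{2}}$ via the Heisenberg relation $[X_{-\varepsilon_{1}-\varepsilon_{2}},X_{\varepsilon_{1}+\varepsilon_{2}}]=Z$. Without an independent argument for $Z|_{V}$, the intertwining step collapses. In fact, establishing that the missing piece of the Cartan action is uniquely determined is precisely the hard content here; the paper obtains it only \emph{a posteriori}, as a by-product of the explicit solution in Lemma~\ref{lem1003} for $X_{-2\varepsilon_{1}}$ (whence $H_{2\varepsilon_{1}}$ via the commutator with $X_{2\varepsilon_{1}}$). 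Your shortcut assumes the conclusion.
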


\begin{proof}
Let $R\in\hat{\mathcal{C}}_{\mathbf{a}}$ be such that
$U(\mathfrak{a})R_{\lambda}=R'$. We choose a weight basis in
$R$ such that the action of $\mathfrak{a}$ on $R'$ and the
action of $X_{2\varepsilon_1}$ on $R$ is given by \eqref{eq1}
(in other words these actions coincide with the
corresponding actions on $\mathrm{F}V$).
Since $X_{\varepsilon_1-\varepsilon_2}$ commutes with
$X_{2\varepsilon_1}$, it follows that the action of
$X_{\varepsilon_1-\varepsilon_2}$ on $R$ is also given by \eqref{eq1}.

It is left to show that the action of $X_{\varepsilon_2-\varepsilon_1}$
extends uniquely from $R'$ to $R$ and then
that there is a unique way to define the action of
$X_{-2\varepsilon_1}$. This will be done
in the Lemmata~\ref{lem1002} and \ref{lem1003} below.
\end{proof}

\begin{lemma}\label{lem1002}
There is a unique way to extend the action of
$X_{\varepsilon_2-\varepsilon_1}$ from $R'$ to $R$.
\end{lemma}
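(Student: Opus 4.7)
The plan is to view $R$ as an $\mathfrak{a}$-module and use the $\mathfrak{sl}_2$-theory of Section~\ref{s3.15} to pin down the missing data. Since $\mathfrak{a}$ preserves $b_1+b_2$, decompose $R=\bigoplus_{s\in\mathbb{Z}}R_s$ with $R_s=\bigoplus_{k\in\mathbb{Z}}V^{(s-k,s+k)}$: each $R_s$ is a cuspidal generalized weight $\mathfrak{a}$-module, so by Proposition~\ref{propsl2}\eqref{propsl2.2} it equals $\overline{W_s}$ for commuting operators $P_s,Q_s$ on $W_s=V^{(s,s)}$. The hypotheses give $P_0=T_1+a_1\,\mathrm{Id}_V$ from the fixed $\mathfrak{a}$-action on $R'=R_0$, and the already-fixed formula $X_{\varepsilon_1-\varepsilon_2,b_1,b_2}=T_2+(a_2+b_2)\mathrm{Id}_V$ (valid on all of $R$ by commutativity with $X_{2\varepsilon_1}$) yields $Q_s=T_2+(a_2+s)\mathrm{Id}_V$ for every $s$. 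The only remaining unknowns are the $P_s$ for $s\neq 0$; showing $P_s=T_1+(a_1+s)\mathrm{Id}_V$ settles the lemma.

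The crucial input is that under the adjoint $\mathfrak{a}$-action, the three root vectors $X_{2\varepsilon_1},X_{\varepsilon_1+\varepsilon_2},X_{2\varepsilon_2}$ (with $H_{\varepsilon_2-\varepsilon_1}$-weights $-2,0,2$) span a copy of the simple three-dimensional $\mathfrak{a}$-module $\mathbb{C}^3\subset\mathfrak{g}$. The multiplication map $\mu\colon\mathbb{C}^3\otimes R_s\to R_{s+1}$, $X\otimes v\mapsto Xv$, is $\mathfrak{a}$-linear and surjective, since $X_{2\varepsilon_1}\colon R_s\to R_{s+1}$ alone is already a bijection. Proposition~\ref{propsl2}\eqref{propsl2.5} supplies the cubic annihilator $\prod_{k\in\{-1,1,3\}}\bigl(\mathbf{c}_\mathfrak{a}-(P_s+Q_s+k\mathrm{Id}_V)^2\bigr)$ on $\mathbb{C}^3\otimes R_s$. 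Under the standing assumption $a_1+a_2\notin\mathbb{Z}$, the pairwise differences of the scalar Casimir eigenvalues $(a_1+a_2+2s+k)^2$ are nonzero (hence invertible operators), so $\mathbb{C}^3\otimes R_s$ splits into three generalized Casimir eigenspaces. Since $R_{s+1}$ has scalar Casimir eigenvalue $(a_1+a_2+2s+3)^2$ by Proposition~\ref{propsl2}\eqref{propsl2.3}, $\mu$ vanishes on the two summands with $k\in\{-1,1\}$ and restricts to a surjection from the $k=3$ summand. Computing $\mathbf{c}_\mathfrak{a}$ on the element $\mu(X_{2\varepsilon_1}\otimes v)\in V^{(s+2,s)}$ in two ways — directly, via the $R_{s+1}$-structure, and via the pushforward from the $k=3$ summand — yields the operator identity $(P_{s+1}+Q_{s+1}+\mathrm{Id}_V)^2=(P_s+Q_s+3\,\mathrm{Id}_V)^2$ on $V$.

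To promote this squared identity to the desired linear one, I would establish commutativity. The relation $[H_{2\varepsilon_1},X_{\varepsilon_2-\varepsilon_1}]=-X_{\varepsilon_2-\varepsilon_1}$, together with the already-derivable form $H_{2\varepsilon_1,b_1,b_2}=T_1+(a_1+b_1+\tfrac12)\mathrm{Id}_V$, yields $[T_1,X_{\varepsilon_2-\varepsilon_1,b_1,b_2}]=0$ for every $\mathbf{b}$; the parallel computation with $[H_{\varepsilon_2-\varepsilon_1},X_{\varepsilon_2-\varepsilon_1}]=2X_{\varepsilon_2-\varepsilon_1}$ gives $[T_2,X_{\varepsilon_2-\varepsilon_1,b_1,b_2}]=0$. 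Hence $P_s,P_{s+1},T_1,T_2$ pairwise commute, and with $A:=P_{s+1}+Q_{s+1}+\mathrm{Id}_V$, $B:=P_s+Q_s+3\,\mathrm{Id}_V$, one has $A-B$ nilpotent (equal scalar parts) and $A+B$ invertible (scalar part $2(a_1+a_2+2s+3)\neq0$); the equation $A^2=B^2$ then factors as $(A-B)(A+B)=0$ and forces $A=B$, i.e.\ $P_{s+1}=P_s+\mathrm{Id}_V$. Induction from $P_0$, running in both directions in $s$, gives $P_s=T_1+(a_1+s)\mathrm{Id}_V$ for every $s\in\mathbb{Z}$, which coincides with the formula for $X_{\varepsilon_2-\varepsilon_1}$ in \eqref{eq1}. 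The main obstacle I anticipate is the careful descent of the Proposition~\ref{propsl2}\eqref{propsl2.5} annihilator through $\mu$, i.e.\ justifying that the three-way generalized Casimir eigenspace decomposition of $\mathbb{C}^3\otimes R_s$ really does isolate the $k=3$ summand in the image; this is exactly where the genericity $a_1+a_2\notin\mathbb{Z}$ plays its essential role.
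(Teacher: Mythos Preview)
Your core strategy---using the adjoint three-dimensional $\mathfrak{a}$-module $\mathbb{C}^3$ spanned by $X_{2\varepsilon_1},X_{\varepsilon_1+\varepsilon_2},X_{2\varepsilon_2}$ together with Proposition~\ref{propsl2}\eqref{propsl2.5} to pin down the Casimir on the next layer $R_{s+1}$---is exactly the paper's idea. The difference lies in how the unknown is extracted, and your route has a genuine gap in the commutativity step.

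You assert that $H_{2\varepsilon_1}$ acts on $V^{\mathbf{b}}$ by $T_1+(a_1+b_1+\tfrac12)\mathrm{Id}_V$ and call this ``already-derivable''. It is not. In the setup preceding Lemma~\ref{lem1002} only the $\mathfrak{a}$-action on $R'=R_0$ and the action of $X_{2\varepsilon_1}$ on all of $R$ have been normalised. The element $H_{2\varepsilon_1}$ lies outside $\mathfrak{a}$, and the only way to obtain it from known generators is as $[X_{2\varepsilon_1},X_{-2\varepsilon_1}]$---but $X_{-2\varepsilon_1}$ is precisely what Lemma~\ref{lem1003} will later determine. All you know about $H_{2\varepsilon_1}$ on $V^{\mathbf{0}}$ is that it commutes with $Y_1,Y_2$ (hence with $T_1,T_2$) and has the correct eigenvalue; its nilpotent part need not equal $T_1$. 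Consequently your derivation of $[T_1,X_{\varepsilon_2-\varepsilon_1}]=0$ fails, and with it the factorisation $(A-B)(A+B)=0\Rightarrow A=B$. From $H_{\varepsilon_2-\varepsilon_1}$ alone (which \emph{is} determined on all of $R$) you only get $[T_2-T_1,X_k]=0$, not the separate commutativities. A related issue: invoking Proposition~\ref{propsl2}\eqref{propsl2.2} to write $R_s=\overline{W_s}$ with a well-defined $P_s$ presupposes that in \emph{your chosen basis} the $\mathbf{e}$-action has the form $P_s-k\,\mathrm{Id}$; abstractly there is such a basis, but it may not agree with the one fixed via $X_{2\varepsilon_1}$.

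The paper sidesteps both problems. Rather than pass through a squared identity and then factor, it computes the Casimir on the upper-row weight space directly as $(Q-P-3)^2+4QX$ with $X$ the single unknown operator, uses the cubic annihilator to force $C=(Q+P+3)^2$, and solves the resulting \emph{linear} equation $4QX=(Q+P+3)^2-(Q-P-3)^2=4Q(P+3)$, whence $X=P+3$ since $Q$ is invertible. No commutativity of $X$ with $P,Q$ is assumed. (Incidentally, the commutativity you want \emph{can} be salvaged: the diagonal $P_s,Q_s$-action on $\mathbb{C}^3\otimes R_s$ commutes with $\mathfrak{a}$, hence descends through $\mu$, and one checks via $X_{2\varepsilon_1}$ that it coincides with the naive diagonal action on $R_{s+1}$---but this is the descent argument you flagged as an obstacle, not the $H_{2\varepsilon_1}$-argument you wrote.)
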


\begin{proof}
Let us first show that for every $k\in\{1,2,\dots\}$ the
action of $X_{\varepsilon_2-\varepsilon_1}$ extends uniquely from
$X_{2\varepsilon_1}^{k-1}R'$ to $X_{2\varepsilon_1}^kR'$
(here $X_{2\varepsilon_1}^0R'=R'$).

Consider the following picture:
\begin{equation}\label{eq11n}
\xymatrix{
\bullet\ar@/^/@{-->}[rr]^{X} &&
\bullet\ar@/^/[ll]^{Q}\\ \\
\bullet\ar@/^/[uu]^{1}\ar@/^/[rr]^{P+1} &&
\bullet\ar@/^/[uu]^{1}\ar@/^/[ll]^{Q}\ar@/^/[rr]^{P}&&
\bullet\ar@/^/[ll]^{Q+1}
\\
}
\end{equation}
Here bullets are weight spaces with some fixed bases.
The lower row is a part of $X_{2\varepsilon_1}^{k-1}R'$ where
the $\mathfrak{a}$-action is already known by induction.
The bases in the weight spaces in the lower row are chosen such
that the action of $\mathfrak{a}$ in the lower row is given by
\eqref{eq1}. The upper row is a part of
$X_{2\varepsilon_1}^{k}R'$ where the $\mathfrak{a}$-action
is to be determined. Arrows pointing up indicate the
action of $X_{2\varepsilon_1}$. The bases of the weight spaces
in the upper row are chosen such that the action of
$X_{2\varepsilon_1}$  is given by the operator
$\mathrm{Id}_V$ (as in \eqref{eq1}). Left arrows indicate the action
of $X_{\varepsilon_1-\varepsilon_2}$. The latter  commutes with the
action of $X_{2\varepsilon_1}$ and hence is given by the same linear
operator in each column. Right arrows indicate the action
of $X_{\varepsilon_2-\varepsilon_1}$ (which is known for
$X_{2\varepsilon_1}^{k-1}R'$ and is to be determined for
$X_{2\varepsilon_1}^{k}R'$). The part to be determined is given
by the dashed arrow. Labels $P$ and $Q$ represent coefficients
(which are linear operators on $V$)
appearing in the corresponding parts of formulae \eqref{eq1}.
Note that $P$ and $Q$ commute.
The action of $X_{\varepsilon_2-\varepsilon_1}$ on
$X_{2\varepsilon_1}^{k}R'$ which is to be determined is given
by some unknown linear operators $X$.

From $H_{\varepsilon_2-\varepsilon_1}=
[X_{\varepsilon_2-\varepsilon_1},X_{\varepsilon_1-\varepsilon_2}]$
we compute that the action of $H_{\varepsilon_2-\varepsilon_1}$ on the
middle weight space in the lower row  is given by $Q-P$.
Using $[H_{\varepsilon_2-\varepsilon_1},X_{2\varepsilon_1}]=
-2X_{2\varepsilon_1}$ we get that $H_{\varepsilon_2-\varepsilon_1}$
acts on the right dot of the upper row via $Q-P-2$.
Using $[H_{\varepsilon_2-\varepsilon_1},X_{\varepsilon_1-\varepsilon_2}]=
-2X_{\varepsilon_1-\varepsilon_2}$ we get that
$H_{\varepsilon_2-\varepsilon_1}$ acts on the left dot of the upper
row via $Q-P-4$.  Hence the action of $C$ on the upper row is given by
$(Q-P-3)^2+4XQ$. The action of $C$ on the lower row
is given by $(Q-P-1)^2+4(P+1)Q=(Q+P+1)^2$.

The elements $X_{2\varepsilon_1}$, $X_{2\varepsilon_2}$ and
$X_{\varepsilon_1+\varepsilon_1}$ form a weight basis of a
simple three-dimensional $\mathfrak{a}$-module $\mathbb{C}^3$
with respect to the adjoint action of $\mathfrak{a}$.
Hence the upper row of our picture is a subquotient of the
tensor product of the lower row and $\mathbb{C}^3$.
Therefore, from Proposition~\ref{propsl2}\eqref{propsl2.5} we
obtain that the linear operator
\begin{displaymath}
(C-(Q+P-1)^2)(C-(Q+P+1)^2)(C-(Q+P+3)^2)
\end{displaymath}
annihilates the upper row. A direct computation using \eqref{eq1}
shows that the action of the operators $C-(Q+P-1)^2$ and
$C-(Q+P+1)^2$ on the part $X_{2\varepsilon_1}^{k}N(\mathbf{a})'$ of
the module $N(\mathbf{a})$ is invertible. As the
$\mathfrak{g}$-module we are working with must have a composition
series with subquotients $N(\mathbf{a})$, it follows that the
action of both $C-(Q+P-1)^2$ and
$C-(Q+P+1)^2$ on $X_{2\varepsilon_1}^{k}R'$ is invertible.
Hence $C-(Q+P+3)^2$ annihilates $X_{2\varepsilon_1}^{k}R'$, which
gives us the equation
\begin{displaymath}
(Q-P-3)^2+4XQ=(Q+P+3)^2.
\end{displaymath}
This equation has a unique solution, namely $X=Q+3$, which gives the
required extension.

Similarly one shows that for $k\in\{-1,-2,\dots\}$ the
action of $X_{\varepsilon_2-\varepsilon_1}$ extends uniquely from
$X_{2\varepsilon_1}^{k+1}R'$ to $X_{2\varepsilon_1}^kR'$
(here again $X_{2\varepsilon_1}^0R'=R'$). This completes the proof of
our lemma.
\end{proof}

\begin{lemma}\label{lem1003}
There is a unique way to define the action of
$X_{-2\varepsilon_1}$ on $N$.
\end{lemma}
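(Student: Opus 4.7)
The plan is to determine $X_{-2\varepsilon_1}$ directly using commutation relations with the operators already pinned down by Lemma~\ref{lem1002}, rather than invoking an additional $\mathfrak{sl}_2$-Casimir identity as in that proof. As a preliminary step I will check, using $2X_{\varepsilon_1+\varepsilon_2}=[X_{\varepsilon_2-\varepsilon_1},X_{2\varepsilon_1}]$ and $X_{2\varepsilon_2}=[X_{\varepsilon_2-\varepsilon_1},X_{\varepsilon_1+\varepsilon_2}]$ together with the known actions of $X_{2\varepsilon_1}=\mathrm{Id}_V$ and the formula for $X_{\varepsilon_2-\varepsilon_1}$ from Lemma~\ref{lem1002}, that $X_{\varepsilon_1+\varepsilon_2}$ and $X_{2\varepsilon_2}$ act as $\mathrm{Id}_V$ on every $V^{\mathbf{b}}$, and (using $[H_{2\varepsilon_1},X_{2\varepsilon_1}]=2X_{2\varepsilon_1}$) that $H_{2\varepsilon_1}|_{V^{\mathbf{b}}}=T_1+(a_1+b_1+\tfrac{1}{2})\mathrm{Id}_V$. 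All of this will be in agreement with \eqref{eq1}.

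Writing $Y_{\mathbf{b}}$ for the action of $X_{-2\varepsilon_1}$ on $V^{\mathbf{b}}$ as an endomorphism of $V$ (under the fixed basis identifications), I will extract three commutator constraints. The vanishing $[X_{2\varepsilon_2},X_{-2\varepsilon_1}]=0$ (valid because $2(\varepsilon_2-\varepsilon_1)\notin\Delta$) will give $Y_{(b_1,b_2+2)}=Y_{(b_1,b_2)}$, so $Y_{\mathbf{b}}$ depends only on $b_1$; I denote it $Y_{b_1}$. The vanishing $[X_{\varepsilon_2-\varepsilon_1},X_{-2\varepsilon_1}]=0$ (valid because $\varepsilon_2-3\varepsilon_1\notin\Delta$) will yield the recursion $(T_1+a_1+b_1-2)Y_{b_1}=Y_{b_1-1}(T_1+a_1+b_1)$. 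The relation $[X_{2\varepsilon_1},X_{-2\varepsilon_1}]=-4H_{2\varepsilon_1}$ will give $Y_{b_1+2}-Y_{b_1}=-2(2T_1+2a_1+2b_1+1)\mathrm{Id}_V$. Separately, $[H_{2\varepsilon_1},X_{-2\varepsilon_1}]=-2X_{-2\varepsilon_1}$ combined with the explicit form of $H_{2\varepsilon_1}$ will force $[T_1,Y_{b_1}]=0$.

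Using this commutativity, I will iterate the recursion twice to get $Y_{b_1-2}=Y_{b_1}\cdot(t-2)(t-3)/(t(t-1))$, where $t:=T_1+a_1+b_1$. Equating this with the two-step shift $Y_{b_1-2}=Y_{b_1}-2(2t-3)$ derived from the third constraint, and using the identity $t(t-1)-(t-2)(t-3)=2(2t-3)$ together with the invertibility of $2T_1+2a_1+2b_1-3$ (which follows from the standing assumption $2a_1\notin\mathbb{Z}$), one arrives at the unique solution $Y_{b_1}=(T_1+a_1+b_1)(T_1+a_1+b_1-1)\mathrm{Id}_V$, matching \eqref{eq1} exactly. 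The main obstacle will be the preliminary bookkeeping: verifying that the derived operators $X_{\varepsilon_1+\varepsilon_2}$, $X_{2\varepsilon_2}$, and $H_{2\varepsilon_1}$ act uniformly on every weight space as prescribed by \eqref{eq1}, which is a short but error-prone computation; the hypothesis $2a_1\notin\mathbb{Z}$ enters essentially only in the final invertibility step.
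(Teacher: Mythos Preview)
Your argument has a genuine gap at the preliminary step where you claim to determine the action of $H_{2\varepsilon_1}$ on each $V^{\mathbf{b}}$. At the start of Lemma~\ref{lem1003} the operators whose actions are already fixed in the chosen basis are $X_{2\varepsilon_1}$, $X_{\varepsilon_1-\varepsilon_2}$, and $X_{\varepsilon_2-\varepsilon_1}$ (the latter by Lemma~\ref{lem1002}); from these one can recover $H_{\varepsilon_2-\varepsilon_1}$, $X_{\varepsilon_1+\varepsilon_2}$ and $X_{2\varepsilon_2}$ by commutators. But $H_{2\varepsilon_1}$ does \emph{not} lie in the Lie subalgebra these elements generate, so its action is not determined by them. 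The relation $[H_{2\varepsilon_1},X_{2\varepsilon_1}]=2X_{2\varepsilon_1}$ gives only the recursion $H_{2\varepsilon_1}|_{V^{(b_1+2,b_2)}}=H_{2\varepsilon_1}|_{V^{\mathbf{b}}}+2\,\mathrm{Id}_V$; it does not fix the initial value $S:=H_{2\varepsilon_1}|_{V^{\mathbf{0}}}$. The most one can deduce from the available relations (commutation with $H_{\varepsilon_2-\varepsilon_1}$ and with the $\mathfrak{a}$-Casimir $C$) is that $S$ commutes with $T_1$ and $T_2$ and has eigenvalue $a_1+\tfrac12$; nothing forces $S=T_1+(a_1+\tfrac12)\mathrm{Id}_V$ rather than, say, $S=T_2+(a_1+\tfrac12)\mathrm{Id}_V$.

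This gap propagates: your Constraint~4, $[H_{2\varepsilon_1},X_{-2\varepsilon_1}]=-2X_{-2\varepsilon_1}$, then yields only $[S,Y_{b_1}]=0$, not $[T_1,Y_{b_1}]=0$. Without the latter commutativity the iterated recursion $Y_{b_1-2}=(t-2)(t-3)\,Y_{b_1}\,t^{-1}(t-1)^{-1}$ cannot be combined with the two-step shift into a scalar linear equation in $Y_{b_1}$, and your solution step breaks down. This is precisely why the paper's proof takes the longer route via the Casimir $C_{\mathfrak{c}}$ of the $\mathfrak{sl}_2$-subalgebra $\mathfrak{c}=\langle X_{\pm 2\varepsilon_1}\rangle$ and Proposition~\ref{propsl2}\eqref{propsl2.4}: that argument shows the unknowns commute with the $\mathfrak{a}$-Casimir $C$ (hence with $T_1+T_2$), which together with commutation with $T_2-T_1$ gives commutation with both $T_1$ and $T_2$. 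Your purely commutator-based approach is appealing, and indeed once commutativity is in hand one can also solve simultaneously for $S$ and $Y_{b_1}$; but you need an independent argument for $[T_1,Y_{b_1}]=0$, and the paper's Casimir/tensor-product trick seems to be the natural way to supply it.
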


\begin{proof}
To determine this action of $X_{-2\varepsilon_1}$ on $N$ we consider
the following extension of the picture \eqref{eq11n} with the same notation
as in the proof of Lemma~\eqref{lem1002}:
\begin{displaymath}
\xymatrix{
\bullet\ar@/^/@{-->}[dd]^{u}\ar@/^/[rr]^{P+3} &&
\bullet\ar@/^/[ll]^{Q}\ar@/^/@{-->}[dd]_{v}\ar@/^/[rr]^{P+2}&&
\bullet\ar@/^/[ll]^{Q+1}\ar@/^/@{-->}[dd]^{w}\\ \\
\bullet\ar@/^/[uu]^{1}\ar@/^/[rr]|-{P+1}\ar@/^/@{-->}[dd]^{x}
\ar@/_/@{.>}[rrrruu]&&
\bullet\ar@/^/[uu]^{1}\ar@/^/[ll]^{Q}\ar@/^/@{-->}[dd]_{y}\ar@/^/[rr]^{Q}&&
\bullet\ar@/^/[uu]^{1}\ar@/^/[ll]^{Q+1}\\ \\
\bullet\ar@/^/[uu]^{1}\ar@/^/[rr]|-{P-1}\ar@/_/@{.>}[rrrruu] &&
\bullet\ar@/^/[uu]^{1}\ar@/^/[ll]^{Q}
}
\end{displaymath}
Here all right arrows, representing the action of
$X_{\varepsilon_2-\varepsilon_1}$,
are now  determined by
Lemma~\ref{lem1002} and we have to figure out the down arrows,
representing the action of $X_{-2\varepsilon_1}$.
The two dotted arrows will be used later on in the proof.

Consider the  $\mathfrak{sl}_2$-subalgebra $\mathfrak{c}$ of
$\mathfrak{g}$ generated by $e:=X_{2\varepsilon_1}$ and
$f:=X_{-2\varepsilon_1}$. Set $h:=[e,f]$. Denote by $Z$ the action
of $h$ in the leftmost weight space of the middle row.
Then $Z=x-u$. The element $h$ commutes with both $h$ and
$H_{\varepsilon_2-\varepsilon_1}$. Therefore, by \eqref{eq1}, the
operator $Z$ commutes with both $T_1$ and $T_2$ and hence with both
$P$ and $Q$.

The algebra algebra $\mathfrak{c}$ has the quadratic Casimir element
$C_{\mathfrak{c}}$, whose action on the $\mathfrak{c}$-module given by
the leftmost column of our picture is given by $x+f(Z)$, where $f$ is
some polynomial of degree two. From \eqref{eq1} it follows that the
unique eigenvalue of this action is nonzero, in particular, $x+f(Z)$
is invertible. Let $x'$ be a fixed square root $x+f(Z)$, which is a
polynomial in $x+f(Z)$.

The elements $X_{\varepsilon_2-\varepsilon_1}$ and
$X_{\varepsilon_2+\varepsilon_1}$ form a basis of a simple
two-dimensional $\mathfrak{c}$-module with respect to the adjoint action.
Using Proposition~\ref{propsl2}\eqref{propsl2.4} and arguments
similar to those used in the proof of Lemma~\ref{lem1002}, we
get that $C_{\mathfrak{c}}-(x'+1)^2$ or
$C_{\mathfrak{c}}-(x'-1)^2$ annihilates the middle column
(the sign depends on the original choice of $x'$). Note that the middle
column equals $X_{\varepsilon_2-\varepsilon_1}$ applied to the
leftmost column.

Similarly, the elements $X_{\varepsilon_1-\varepsilon_2}$
and $X_{-\varepsilon_2-\varepsilon_1}$ form a basis of a simple
two-dimensional $\mathfrak{c}$-module with respect to the adjoint
action.  Applying the same arguments as in the previous paragraph
we get that $C_{\mathfrak{c}}-(x')^2$ annihilates any vector of the
form $X_{\varepsilon_1-\varepsilon_2}
X_{\varepsilon_2-\varepsilon_1}\mathtt{v}$,
where $\mathtt{v}$ is from the leftmost column.
This implies that the actions of $C_{\mathfrak{c}}$ and
$X_{\varepsilon_1-\varepsilon_2}X_{\varepsilon_2-\varepsilon_1}$
and thus the actions of $C_{\mathfrak{c}}$ and $C$
on the leftmost column commute.
As the action of $H$ commutes with the
action of $C$, we thus obtain that $x$ commutes with the action of
$C$. This implies that $x$ commutes with $T_1+T_2$. As
it obviously commutes with $T_1-T_2$, we get that $x$ commutes with
both $T_1$ and $T_2$ and hence with both $P$ and $Q$.

Similarly one shows that $y$, $u$, $v$ and $w$ commute with both $P$
and $Q$. From the commutativity of $X_{\varepsilon_2-\varepsilon_1}$
and $X_{-2\varepsilon_1}$ we get the following conditions:
\begin{displaymath}
y(P+1)=(P-1)x,\,\,
v(P+3)=(P+1)u,\,\,
w(P+2)(P+3)=P(P+1)u.
\end{displaymath}
Here everything commutes by the above and $P+1$, $P+2$ and $P+3$ are
invertible (as $X_{\varepsilon_2-\varepsilon_1}$ acts bijectively).
Therefore
\begin{displaymath}
y=(P-1)(P+1)^{-1}x,\,\,
v=(P+1)(P+3)^{-1}u,\,\,
w=P(P+1)(P+3)^{-1}(P+2)^{-1}u.
\end{displaymath}
This implies that $y$, $v$ and $w$ are uniquely determined by $x$ and $u$.

Since the actions of both $X_{\varepsilon_2-\varepsilon_1}$ and
$X_{2\varepsilon_1}$ are completely determined, we can compute the
action of $X_{2\varepsilon_2}$ and see that it is given
(similarly to the action of $X_{2\varepsilon_1}$) by $\mathrm{Id}_V$
(this is depicted by the dotted arrows in the picture).
As $X_{-2\varepsilon_2}$ and $X_{2\varepsilon_2}$ commute, we obtain
that $w=x$, that is
\begin{equation}\label{eq23}
x= P(P+1)(P+3)^{-1}(P+2)^{-1}u.
\end{equation}
Therefore the only parameter left for now is $u$.

On the one hand, the action of the element $h$ on the middle dot of
the second row is given by  $y-v=(P-1)(P+1)^{-1}x-(P+1)(P+3)^{-1}u$.
On the other hand, from $[h,X_{\varepsilon_2-\varepsilon_1}]=4
X_{\varepsilon_2-\varepsilon_1}$ we have that this action
equals $Z+4=x-u+4$. This gives us the equation
\begin{equation}\label{eq22}
(P-1)(P+1)^{-1}x-(P+1)(P+3)^{-1}u=x-u+4.
\end{equation}

Using \eqref{eq22} and \eqref{eq23} we get the equation
\begin{displaymath}
\frac{P(P-1)}{(P+2)(P+3)}u+
\frac{P+1}{P+3}u
=\frac{P(P+1)}{(P+2)(P+3)}u -u +4.
\end{displaymath}
This is a linear equation with nonzero coefficients
and thus it has a unique solution,
namely $u=(P+3)(P+2)$. Hence $u$ is uniquely defined.
The claim of the lemma follows.
\end{proof}

\section{Consequences}\label{s4}

\begin{corollary}\label{cor31}
Let $\mathbf{a}\in\mathbb{C}^n$ be such that $a_i\not\in\mathbb{Z}$
and $a_i+a_j\not\in\mathbb{Z}$ for all $i$ and $j$.
Let $M\in\hat{\mathcal{C}}$ and $\lambda\in\mathrm{supp}(M)$.
Denote by $U_0$ the centralizer of $\mathfrak{h}$ in $U(\mathfrak{g})$.
Then for any $A,B\in U_0$ the actions of $A$ and $B$ on
$M_{\lambda}$ commute.
\end{corollary}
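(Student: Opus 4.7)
The plan is to reduce the statement to the structure theorem for $\hat{\mathcal{C}}_{\mathbf{a}}$ proved in Theorem~\ref{thm1} and then to read off commutativity from the explicit formulas \eqref{eq1}. First I would note the standard fact that $U_0$ preserves every weight space: if $A\in U_0$ and $v\in M_{\lambda}$, then for any $h\in\mathfrak{h}$ we have $h(Av)=A(hv)=\lambda(h)Av$, so $Av\in M_{\lambda}$. Then I would use the block decomposition of $\hat{\mathcal{C}}$ to reduce to the case that $M$ sits in a single block. Trivial blocks contain only the zero module, so we may assume the block is nontrivial; by Lemma~\ref{lem2} it is equivalent to $\hat{\mathcal{C}}_{\mathbf{a}}$, and by Theorem~\ref{thm1} there is a finite-dimensional $\mathbb{C}[[t_1,t_2,\dots,t_n]]$-module $V$ with $M\cong \mathrm{F}V$. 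By Proposition~\ref{prop4}\eqref{prop4.15}, $M_{\lambda}$ is identified with $V^{\mathbf{b}}$ for a unique $\mathbf{b}\in\mathbf{B}$.

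Next, the key claim I would establish is that the action of every $u\in U_0$ on $V^{\mathbf{b}}$ is given by a polynomial in the pairwise commuting operators $T_1,\dots,T_n$ (with scalar coefficients depending on $u$, $\mathbf{a}$ and $\mathbf{b}$). To see this, expand $u$ as a linear combination of PBW monomials in the basis $\mathbf{C}$; since $u$ commutes with $\mathfrak{h}$, only monomials of total weight zero can contribute a nontrivial endomorphism of $V^{\mathbf{b}}$, and all other contributions vanish after grouping by weight. Inspecting \eqref{eq1}, every generator in $\mathbf{C}$ acts on a typical weight space $V^{\mathbf{b}'}$ as multiplication by an operator of the form $T_i+c$, $(T_i+c)(T_j+c')$, $T_i-T_j+c$, or the identity, followed by the canonical identification with the shifted weight space, where the scalars $c,c'\in\mathbb{C}$ depend on $\mathbf{a}$ and $\mathbf{b}'$. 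Consequently, for any weight-zero PBW monomial $m$, a short induction on the length of $m$ shows that its action on $V^{\mathbf{b}}$ equals multiplication by some polynomial in $T_1,\dots,T_n$; the inductive step uses that after applying the first $k$ letters of $m$ we land in a definite intermediate weight space, and the next letter multiplies by an operator of the above form, whose scalar shift is fixed by that intermediate weight.

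Once this is established, the action of $U_0$ on $M_{\lambda}\cong V^{\mathbf{b}}$ factors through the commutative subalgebra $\mathbb{C}[T_1,\dots,T_n]\subset \mathrm{End}_{\mathbb{C}}(V)$, so any two elements $A,B\in U_0$ commute on $M_{\lambda}$. The only subtle point is the bookkeeping inside the inductive step — tracking how the $\mathbf{b}'$-dependent scalars in \eqref{eq1} combine along a closed loop starting and ending at $V^{\mathbf{b}}$ — but this is purely mechanical and does not require any new input beyond \eqref{eq1} and the commutativity of the $T_i$.
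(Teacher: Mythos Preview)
Your proposal is correct and follows the same route as the paper: reduce to $M\cong \mathrm{F}V$ and then read off from \eqref{eq1} that every element of $U_0$ acts on a given generalized weight space $V^{\mathbf{b}}$ as a polynomial in the commuting operators $T_1,\dots,T_n$. Your detailed justification of this last point (weight-zero PBW monomials act by polynomials in the $T_i$'s) is exactly what the paper's terse phrase ``the claim follows from the formulae \eqref{eq1}'' is abbreviating. One small remark: your detour through Lemma~\ref{lem2} is unnecessary and, strictly speaking, does not give what you claim---an abstract equivalence of blocks does not transport the $U_0$-action on weight spaces, so you cannot conclude $M\cong \mathrm{F}V$ for $M$ in an arbitrary block; the hypothesis on $\mathbf{a}$ in the statement is there precisely so that $M$ already lies in $\hat{\mathcal{C}}_{\mathbf{a}}$, where the density of $\mathrm{F}$ (established in the proof of Theorem~\ref{thm1}) applies directly.
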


\begin{proof}
By Proposition~\ref{prop4}, we may assume that
$M\cong \mathrm{F}V$. For the module $\mathrm{F}V$ the claim
follows from the formulae \eqref{eq1}.
\end{proof}

\begin{corollary}\label{cor32}
For any simple weight cuspidal $\mathfrak{g}$-module $L$
with finite dimensional weight spaces we have
$\dim\mathrm{Ext}^1_{\mathfrak{g}}(L,L)=n$.
\end{corollary}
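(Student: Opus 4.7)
The plan is to translate the computation via Theorem~\ref{thm1} into a question about the complete local ring $R:=\mathbb{C}[[t_1,\ldots,t_n]]$ and then compute directly.

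First, since $L$ is simple weight cuspidal with finite-dimensional weight spaces, it lies in $\mathcal{C}\subset\hat{\mathcal{C}}$ and hence in some nontrivial block $\hat{\mathcal{C}}_{\chi,\xi}$. By Lemma~\ref{lem2} all such blocks are equivalent, so we may replace this block by one of the form $\hat{\mathcal{C}}_{\mathbf{a}}$, and Theorem~\ref{thm1} then identifies it with the category of finite-dimensional $R$-modules. Since $\hat{\mathcal{C}}_{\mathbf{a}}$ has a unique simple object, $L$ must correspond under this equivalence to the unique simple finite-dimensional $R$-module, namely the residue field $\mathbb{C}=R/\mathfrak{m}$ with $\mathfrak{m}=(t_1,\ldots,t_n)$.

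Next, I would verify that $\mathrm{Ext}^1_{\mathfrak{g}}(L,L)$ is computed inside the block. Given any short exact sequence $0\to L\to M\to L\to 0$ of $\mathfrak{g}$-modules, the generalized weight spaces of $M$ have dimension at most $2\dim L_\lambda<\infty$, and a short five-lemma argument shows that if some $X\in\mathfrak{g}_\alpha\setminus\{0\}$ acts bijectively on both copies of $L$, then it acts bijectively on $M$, so $M$ is cuspidal. The support coset $\xi$ and the generalized central character $\chi$ are inherited from $L$, so $M\in\hat{\mathcal{C}}_{\mathbf{a}}$, and therefore
\begin{displaymath}
\mathrm{Ext}^1_{\mathfrak{g}}(L,L)=\mathrm{Ext}^1_{\hat{\mathcal{C}}_{\mathbf{a}}}(L,L)=\mathrm{Ext}^1_R(\mathbb{C},\mathbb{C}).
\end{displaymath}

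Finally, compute $\mathrm{Ext}^1_R(\mathbb{C},\mathbb{C})$ directly. An extension $0\to\mathbb{C}\to E\to\mathbb{C}\to 0$ makes $E$ a two-dimensional $R$-module; choose a basis $\{e_1,e_2\}$ with $e_1$ spanning the submodule and $e_2$ lifting a generator of the quotient. Each $t_i$ must then satisfy $t_i\cdot e_1=0$ and $t_i\cdot e_2=c_ie_1$ for scalars $c_i\in\mathbb{C}$, while the commutations $t_it_j=t_jt_i$ and the nilpotence of the action (required for $E$ to be a genuine finite-dimensional $R$-module) are automatic. A change of splitting $e_2\mapsto e_2+\alpha e_1$ leaves the tuple $(c_1,\ldots,c_n)$ unchanged, so extension classes are in natural bijection with $\mathbb{C}^n$; equivalently this is the standard identification $\mathrm{Ext}^1_R(\mathbb{C},\mathbb{C})\cong\mathfrak{m}/\mathfrak{m}^2$. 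Combining the two displays gives $\dim\mathrm{Ext}^1_{\mathfrak{g}}(L,L)=n$.

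The main bookkeeping point, and the only mild obstacle, is the middle step: confirming that all self-extensions of $L$ stay inside $\hat{\mathcal{C}}_{\mathbf{a}}$ so that Theorem~\ref{thm1} can actually be applied to the Ext computation. Once this is granted, both the identification of $L$ with $R/\mathfrak{m}$ and the computation of $\mathrm{Ext}^1_R(\mathbb{C},\mathbb{C})$ are routine.
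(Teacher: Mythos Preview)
Your proposal is correct and follows essentially the same approach as the paper: reduce via Theorem~\ref{thm1} to computing $\mathrm{Ext}^1_R(\mathbb{C},\mathbb{C})$ for $R=\mathbb{C}[[t_1,\ldots,t_n]]$, then observe this has dimension $n$. The paper's proof is a one-liner invoking exactly this reduction; your version additionally spells out why self-extensions of $L$ in $\mathfrak{g}\text{-}\mathrm{mod}$ already lie in $\hat{\mathcal{C}}_{\mathbf{a}}$, a point the paper leaves implicit (it only states that $\hat{\mathcal{C}}_{\mathbf{a}}$ is closed under extensions).
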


\begin{proof}
This follows from Theorem~\ref{thm1} and the observation
that a similar equality is true for the unique simple
$\mathbb{C}[[t_1,t_2,\dots,t_n]]$-module.
\end{proof}

We also recover the main result of \cite{BKLM}:

\begin{corollary}[\cite{BKLM}]\label{cor33}
The category of all weight cuspidal $\mathfrak{g}$-modules is semi-simple.
\end{corollary}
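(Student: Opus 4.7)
The plan is to deduce semi-simplicity by pushing Theorem~\ref{thm1} through the equivalence $\mathrm{F}$ and identifying, inside $\mathbb{C}[[t_1,t_2,\dots,t_n]]\text{-}\mathrm{mod}$, precisely those objects which come from genuine (as opposed to merely generalized) weight modules. First I would use the block decomposition of $\hat{\mathcal{C}}$ to reduce to showing semi-simplicity of each nontrivial $\mathcal{C}_{\chi,\xi}:=\mathcal{C}\cap\hat{\mathcal{C}}_{\chi,\xi}$. By Lemma~\ref{lem2} and Theorem~\ref{thm1}, each such $\hat{\mathcal{C}}_{\chi,\xi}$ is equivalent, via $\mathrm{F}$, to $\mathbb{C}[[t_1,\dots,t_n]]\text{-}\mathrm{mod}$.

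Next I would inspect the formulas \eqref{eq1} to read off when $\mathrm{F}V$ is an honest weight module. The element $H_{2\varepsilon_1}$ acts on $V^{\mathbf{b}}$ by $T_1+\frac{1}{2}(2a_1+2b_1+1)\mathrm{Id}_V$, and each $H_{\varepsilon_{i+1}-\varepsilon_i}$ acts by $T_{i+1}-T_i+(a_{i+1}+b_{i+1}-a_i-b_i)\mathrm{Id}_V$. Since the $T_i$ are nilpotent, requiring the Cartan to act diagonalisably on every $V^{\mathbf{b}}$ forces $T_1=0$ and $T_{i+1}-T_i=0$ for $i=1,\dots,n-1$, i.e.\ all $T_i$ act as zero on $V$.

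Consequently, under $\mathrm{F}$, the subcategory $\mathcal{C}_{\chi,\xi}$ corresponds to the full subcategory of $\mathbb{C}[[t_1,\dots,t_n]]$-modules annihilated by the maximal ideal $(t_1,\dots,t_n)$. This is just the category of finite dimensional $\mathbb{C}$-vector spaces, which is manifestly semi-simple with a unique simple object. Pulling this back through the equivalence gives the desired semi-simplicity of $\mathcal{C}_{\chi,\xi}$, and summing over blocks completes the proof.

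I do not anticipate any substantive obstacle: the argument is a direct translation of semi-simplicity across the equivalence furnished by Theorem~\ref{thm1}. The only point demanding a moment of care is that every weight module in the block is of the form $\mathrm{F}V$ with $V$ semi-simple; this follows from the essential surjectivity of $\mathrm{F}$ combined with the observation above that any $\mathrm{F}V$ lying in $\mathcal{C}$ already forces the nilpotent operators $T_i$ to vanish.
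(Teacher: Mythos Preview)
Your proposal is correct and follows essentially the same route as the paper's own proof: reduce to a block where the explicit functor $\mathrm{F}$ is available, read off from \eqref{eq1} that $\mathrm{F}V$ is an honest weight module precisely when all the nilpotent operators $T_i$ vanish, and conclude that the weight subcategory corresponds to $\mathbb{C}[[t_1,\dots,t_n]]/(t_1,\dots,t_n)\cong\mathbb{C}$-modules. The only cosmetic difference is that the paper invokes \cite[Lemma~2]{BKLM} (the $\mathcal{C}$-version) directly to reduce to a single block, whereas you phrase the reduction via Lemma~\ref{lem2} and Theorem~\ref{thm1}; since the former is exactly what underlies the latter, the arguments coincide.
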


\begin{proof}
By \cite[Lemma~2]{BKLM}, all blocks of the category of weight cuspidal $\mathfrak{g}$-modules are equivalent. Hence it is enough to prove the
claim for the block containing $N(\mathbf{a})$ for some $\mathbf{a}\in\mathbb{C}^n$ such that $a_i+a_j\not\in\mathbb{Z}$ for all $i,j$.
From \eqref{eq1} it follows that the module $\mathrm{F}V$ is weight if
and only if all operators $T_i$ are semi-simple, hence zero. Therefore
from Theorem~\ref{thm1} we get that the block of the category of
weight cuspidal modules is equivalent to the category of
finite dimensional modules over
$\mathbb{C}[[t_1,t_2,\dots,t_n]]/(t_1-0,t_2-0,\dots,t_n-0)
\cong \mathbb{C}$. The claim follows.
\end{proof}

\vspace{0.5cm}

\noindent
V.M.: Department of Mathematics, Uppsala University, SE 471 06,
Uppsala, SWEDEN, e-mail: {\tt mazor\symbol{64}math.uu.se}
\vspace{0.2cm}

\noindent
C.S.: Mathematik Zentrum, Universit{\"a}t Bonn,
Endenicher Allee 60, D-53115, Bonn, GERMANY,
e-mail: {\tt stroppel\symbol{64}uni-bonn.de}
\vspace{0.2cm}

\end{document}